\newtheorem{thm}{Theorem}[section]
\newtheorem{cor}[thm]{Corollary}
\newtheorem{prop}[thm]{Proposition}
\theoremstyle{definition}
\newtheorem{defn}[thm]{Definition}
\theoremstyle{remark}
\pgfplotsset{compat=1.9}
\newcommand{\RN}[1]{%
	\textup{\uppercase\expandafter{\romannumeral#1}}%
}
\newcounter{x}
\numberwithin{equation}{section}
\author[Chutian Ma]{Chutian Ma}
\email{cma27@jhu.edu}
\title{On the Fourier Truncation Method for the Rough Data Cubic Defocusing NLW on  $\mathbb{H}^3$}
\begin{document}
	\begin{abstract}
		In this paper, we study the cubic defocusing nonlinear wave equation on the three dimensional hyperbolic space. We use the Fourier truncation method to show that the equation is globally well-posed and scatters if the initial data lies in $H^s(\mathbb{H}^3)$, $s>\frac{182}{201}\approx 0.905$.
		
	\end{abstract}
	\maketitle
	
	\section{Introduction}
	
	In this paper we study the following cubic defocusing nonlinear wave equation on the hyperbolic space $\mathbb{H}^3$ 	\begin{equation}\label{eq:01}
		\left\{\begin{aligned}
			&u_{tt}-\Delta_{\mathbb{H}^3}u+F(u)=0\\
			&u(0,x)=u_0(x),u_t(0,x)=u_1(x)
		\end{aligned}
		\right.
	\end{equation}
	with $F(u)=u^3$. The initial data $(u_0,u_1)$ is assumed to lie in the Sobolev space $H^s(\mathbb{H}^3)$, whose norm is defined as \\
	\begin{equation*}
		\|f\|_{H^s(\mathbb{H}^3)}:=\|(-\Delta_{\mathbb{H}^3})^\frac{s}{2}f\|_{L^p(\mathbb{H}^3)}
	\end{equation*}
	On Euclidean space, it is well known\cite{lindblad1995existence} that \ref{eq:01} is locally well-posed if 
	$s\geq\frac{1}{2}$. Local and global well-posedness are defined in the usual way:
	\begin{defn}[Local Well-posedness]
		The initial value problem (\ref{eq:01}) is said to be locally well-posed if there exists an interval $I\subset \mathbb{R}$ containing time 0 such that:
		\begin{enumerate}
			\item There exists a unique solution $u\in C_tH^\frac{1}{2}_x(I)\cap L^4_{t,x}(I)$ and $u_t\in C_tH^{-\frac{1}{2}}_x(I)$
			\item The solution depends continuously on the initial data in the sense of the topology of the function space in (1)
		\end{enumerate}
	\end{defn}
	\begin{defn}[Global Well-posedness]
		The initial value problem \ref{eq:01} is said to be globally well-posed if for any interval $I\subset \mathbb{R}$,
		\begin{enumerate}
			\item There exists a unique solution $u\in C_tH^\frac{1}{2}_x(I)\cap L^4_{t,x}(I)$ and $u_t\in C_tH^{-\frac{1}{2}}_x(I)$
			\item The solution depends continuously on the initial data in the sense of the topology of the function space in (1)
		\end{enumerate}
	\end{defn}	
	$H^1$ solution has conserved energy
	\begin{equation}\label{eq:02}
		E(u)(t)=\int \frac{1}{2}|\nabla u|^2+\frac{1}{2}u_t^2+\frac{1}{4}u^4
	\end{equation}
	Global well-posedness of $H^1$ solution to (\ref{eq:01}) follows from (\ref{eq:02}) and local well-posedness. In fact, it is conjectured that global well-posedness and scattering are true for all the initial data in $H^s\times H^{s-1}$ with $s\geq\frac{1}{2}$. 
	\begin{defn}
	    The solution to (\ref{eq:01}) is said to scatter to a linear solution in $H^s\times H^{s-1}$ if there exist $(u_0^+,u_1^+),(u_0^-,u_1^-)\in H^s\times H^{s-1}$ such that
	    \begin{equation*}
	      \begin{aligned}
	    	&lim_{t\rightarrow +\infty}\|(u,u_t)-S(t)(u_0^+,u_1^+)\|_{H^s\times H^{s-1}}=0\\
	    	&im_{t\rightarrow -\infty}\|(u,u_t)-S(t)(u_0^-,u_1^-)\|_{H^s\times H^{s-1}}=0
	      \end{aligned}
	    \end{equation*}	
	    where $S(t)(u_0,u_1)$ is the propagator of the free wave equation.
	\end{defn}

	The nonlinear wave equations on the Euclidean space have a rich collection of rough data results.
	
	\cite{kenig2000global} proved global well-posedness for (\ref{eq:01}) in $\mathbb{R}^3$ for $\frac{3}{4}<s<1$. In particular, they showed that the $H^s\times H^{s-1}$ norm of the solution grows at most like polynomial in time. The minimum regularity required for global solutions have been improved, see \cite{gallagher2003global}, \cite{bahouri2006global}, \cite{roy2007global} for example. Recently, \cite{dodson2019global} proved global well-posedness for radial initial data in $H^s\times H^{s-1}$ when $s>\frac{1}{2}$ and in a later paper\cite{dodson2018global} extend the result to the critical Sobolev space $H^\frac{1}{2}\times H^{-\frac{1}{2}}$(with the radial assumption). See also \cite{dodson2022global}\\

	In the hyperbolic space, due to geometric reasons, the free wave equation has stronger dispersive estimates compared to the Euclidean case. Pierfelice in \cite{pierfelice2008weighted} obtained weighted Strichartz estimates for \ref{eq:01} with radial data on Damek-Ricci spaces, which include hyperbolic spaces. See also \cite{ionescu2000fourier}. For 3-dimensional hyperbolic space specifically, Metcalfe and Taylor\cite{metcalfe2011nonlinear} investigated (\ref{eq:01}) on $\mathbb{H}^3$ and obtained Strichartz type inequalities. The admissible pairs in their result are broader than those in their counterpart in Euclidean space. See (\ref{thmStric}). As an application, they proved small data global results for (\ref{eq:01}) with initial data in $H^\frac{1}{2}\times H^{-\frac{1}{2}}$. Anker and Pierfelice in \cite{anker2014wave} also investigated the Klein-Gordon equation on hyperbolic spaces $\mathbb{H}^d,d\geq 2$, of which the wave equation is a special case. They proved Strichartz estimates for a large family of admissible pairs.\\

    The Laplacian $-\Delta$ on $\mathbb{H}^d$ is positive with spectrum $[\rho,+\infty)$, where $\rho=\frac{d-1}{2}$. A variant of (\ref{eq:01}) is also considered, which is called the shifted wave equation(\ref{eq:01shift}).
    \begin{equation}\label{eq:01shift}
		\left\{\begin{aligned}
			&u_{tt}-\Delta_{\mathbb{H}^3}-\rho^2u+F(u)=0\\
			&u(0,x)=u_0(x),u_t(0,x)=u_1(x)
		\end{aligned}
		\right.
	\end{equation}
    Fontaine in \cite{jean1997semilinear} studied the shifted wave equation on 2d and 3d. Tataru in \cite{tataru2001strichartz} obtained dispersive estimates for shifted wave equations in hyperbolic space. Anker, Pierfelice and Vallarino obtained Strichartz estimates for (\ref{eq:01shift}) for a large family of admissible pairs. As an application, Shen and Staffilani in \cite{shen2016semi} proved global well-posedness and scattering for (\ref{eq:01})(and more general power) with $H^{\frac{1}{2},\frac{1}{2}}\times H^{-\frac{1}{2},\frac{1}{2}}$ initial data. 
    
    This paper aims to study the initial value problem (\ref{eq:01}) with low regularity initial data in $H^s\times H^{s-1}$, $s<1$.
	Our main result is the following theorem:
	
	\begin{thm}
	  The initial value problem (\ref{eq:01}) is globally well-posed and scatters for initial data in $H^s\times H^{s-1}$, assuming that $s>\frac{182}{201}\approx 0.905$. 
	
	\end{thm}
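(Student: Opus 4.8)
The plan is to implement Bourgain's high–low frequency decomposition (the Fourier truncation method) adapted to the Helgason/spherical Fourier analysis on $\mathbb{H}^3$, using the conserved defocusing energy (\ref{eq:02}) to globalize the low-frequency part and the hyperbolic Strichartz estimates (\ref{thmStric}) to control the high-frequency remainder. Fix a target time $T$ and a large frequency parameter $N=N(T)$ to be chosen at the end. Using the spectral calculus of $-\Delta_{\mathbb{H}^3}$ I would introduce projections $P_{\leq N}$ and $P_{>N}$ onto spectral parameters below and above $N$, and split the data as $(u_0,u_1)=(P_{\leq N}u_0,P_{\leq N}u_1)+(P_{>N}u_0,P_{>N}u_1)$. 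The low part has finite energy, $E\lesssim N^{2(1-s)}\|(u_0,u_1)\|_{H^s\times H^{s-1}}^2$, while the high part is small in the scaling-critical space, $\|(P_{>N}u_0,P_{>N}u_1)\|_{H^{1/2}\times H^{-1/2}}\lesssim N^{-(s-1/2)}$. The spectral gap $[\rho^2,\infty)$ of $-\Delta_{\mathbb{H}^3}$ should only help these bounds, via the sharpened Sobolev embeddings it provides.

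Next I would let $u^{\ell}$ solve (\ref{eq:01}) with the low-frequency data. Since the equation is defocusing and $u^{\ell}(0)$ has finite energy, conservation of (\ref{eq:02}) together with the $H^1$ local theory yields a global solution whose energy remains $\lesssim N^{2(1-s)}$ for all time. Setting $w:=u-u^{\ell}$, the remainder solves the difference equation $w_{tt}-\Delta_{\mathbb{H}^3}w=-\big(3(u^{\ell})^2 w+3u^{\ell}w^2+w^3\big)$ with small, frequency-localized initial data. The central claim — a quantitative smoothing estimate — is that on a unit-length time interval the Duhamel (purely nonlinear) part of $w$ lies in $H^1$ with a genuine gain: its $H^1$ norm is $\lesssim N^{-\beta}$ for some $\beta=\beta(s)>0$. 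This is exactly where the hyperbolic Strichartz inequalities (\ref{thmStric}), whose admissible range is wider than in the Euclidean setting, enter. I would run a contraction in a Strichartz space at regularity $\tfrac12$ to solve the $w$-equation locally, then estimate the trilinear Duhamel term one derivative higher, trading the extra derivative against the smallness of the high-frequency data and the strong dispersive decay.

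The globalization is then an increment/bootstrap argument. Writing $u(1)=u^{\ell}(1)+w^{\mathrm{lin}}(1)+e_1$, where $w^{\mathrm{lin}}$ is the (still frequency-localized, $H^{1/2}$-small) free evolution of the high data and $\|e_1\|_{H^1}\lesssim N^{-\beta}$, I would absorb the smooth correction $e_1$ into a new finite-energy datum at time $1$ and re-solve, keeping $w^{\mathrm{lin}}(1)$ as the new small high-frequency datum. Each such step raises the conserved energy of the low solution by at most the cross term $\lesssim N^{-\beta}E^{1/2}\lesssim N^{1-s-\beta}$, so after the $\sim T$ steps needed to reach time $T$ the energy stays $\lesssim N^{2(1-s)}$ provided $T\,N^{1-s-\beta}\lesssim N^{2(1-s)}$, i.e. provided $N$ is taken to be a fixed power of $T$. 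This closes precisely when the numerology forces $\beta(s)>0$; optimizing the Strichartz exponents in the smoothing estimate is what produces the threshold $s>\tfrac{182}{201}$, and the same bound gives polynomial-in-time control of $\|(u,u_t)\|_{H^s\times H^{s-1}}$.

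Finally, for scattering I would exploit the time-integrable dispersive decay on $\mathbb{H}^3$: the finite-energy piece $u^{\ell}$ scatters by the energy-space theory, while the remainder $w$, being $H^1$-smooth with controlled growth and small high-frequency data, has finite global $L^4_{t,x}$ Strichartz norm and therefore also scatters; recombining the two Duhamel representations produces the asymptotic states in $H^s\times H^{s-1}$. The main obstacle I expect is the quantitative smoothing estimate for the trilinear Duhamel term together with the bookkeeping of exponents: one must extract a uniform power gain $N^{-\beta}$ even though the coefficient $u^{\ell}$ is only of energy size $N^{1-s}$, and it is the tension between this derivative gain and the growth of the low-frequency energy that pins the constant at $\tfrac{182}{201}$ rather than at a smaller value.
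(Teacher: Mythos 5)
Your decomposition is indeed the paper's scheme: evolve the high frequencies linearly, evolve the finite-energy low frequencies by the cubic equation, show the correction term (zero data, trilinear forcing) gains a full derivative, and re-absorb it into the low mode at each step. The genuine gap is in how you globalize and conclude scattering. You slice time into unit intervals, take $\sim T$ steps to reach time $T$, and choose $N=N(T)$ as a power of $T$. This is the Euclidean-style truncation argument: at best it yields global well-posedness with polynomially growing norms, and it cannot yield scattering as written, because the decomposition itself depends on $T$ --- as $T\to\infty$ you must take $N(T)\to\infty$, the low-mode energy $\sim N^{2(1-s)}$ diverges, and no space-time norm of $u$ is ever bounded uniformly in $T$. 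Your closing step (``$u^{\ell}$ scatters by the energy-space theory, $w$ has finite global $L^4_{t,x}$ norm'') is circular: finite-energy scattering on $\mathbb{H}^3$ is itself a Morawetz-type theorem, and all of your bounds on $w$ live on $[0,T]$ with constants that blow up as $T$ grows. A further quiet problem: the smallness needed to close the local nonlinear estimates for the correction term must come from somewhere, and unit time length does not supply it when the low-mode data is of size $N^{1-s}$ in energy norm.

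The paper removes the $T$-dependence entirely, and this is the idea you are missing. Time is sliced not by length but by the scattering norm: the subintervals $I_j$ satisfy $\|u\|_{L^4_{t,x}(I_j)}^4\leq\epsilon$, so the number of steps is $M/\epsilon$, where $M$ is a bound for $\|u\|^4_{L^4_{t,x}(I)}$ on the whole interval --- a count independent of $T$, and the local $\epsilon$-smallness is exactly what closes the estimates for the correction term. The quantity $M$ is then controlled self-consistently by a Morawetz estimate for the modified equation satisfied by $\zeta=\phi+v$,
\begin{equation*}
\|\zeta\|_{L^4_{t,x}}^4 \lesssim \sup_{t\in I}E(\zeta)+\|\mathcal{N}\zeta\|_{L^1_{t,x}(I)}+\|\mathcal{N}\nabla\zeta\|_{L^1_{t,x}(I)},\qquad \mathcal{N}=\mathcal{O}(\psi^3+\psi\zeta^2),
\end{equation*}
combined with a bootstrap: assuming $\|u\|^4_{L^4_{t,x}(I)}\leq M$ with $M\sim s_0^{-\frac{3}{16}s+\frac{1}{8}}$, the accumulated energy increment over the $M/\epsilon$ steps keeps $E(\zeta)\lesssim s_0^{-(1-s)}$, and the Morawetz bound then returns $\|\zeta\|^4_{L^4_{t,x}}\lesssim s_0^{\frac{3}{2}s-\frac{11}{8}}M^{\frac{5}{8}}\leq\frac{1}{2}M$ precisely when $s>\frac{182}{201}$. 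This uniform-in-$T$ $L^4_{t,x}$ bound delivers global well-posedness and scattering simultaneously, and it is also where the specific threshold comes from; your unit-interval bookkeeping, which never confronts the Morawetz step, has no mechanism to reproduce that number.
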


	Our proof uses the Fourier truncation method, using which the author also proved similar results for nonlinear cubic Schr\'{o}dinger equation in \cite{ma2022scattering}. The major obstacle to reach global results from local ones is the lack of a conserved quantity that controls the $H^\frac{1}{2}\times H^{-\frac{1}{2}}$ norm of the solution. To counter that, we split the initial data into two parts, one of high frequency and another of low frequency. We evolve the high frequency data by the linear equation. The low frequency data is evolved by the standard cubic equation. This results in a correction term which starts with zero initial data and satisfies the cubic equation plus some error terms in its nonlinearity. We first evolve the three pieces in a small time interval. The low mode has conserved energy. What we need to estimate is the energy of the correction term. We will add the correction term to the low mode when we start the evolution on the next time interval. So that the new low mode will start with data from the original low mode and the correction term, whereas the new correction term starts with zero initial data again. We divide time into subintervals and repeat this procedures step by step. Finally, we show that the energy increment the low mode gained from the correction terms is bounded.\\
	The structure of the paper is organized as following. In section 2, we present some preliminary results we need for our proof. In section 3, we elaborate our scheme using the fourier truncation method and estimate the energy of the low mode on a given time interval. In section 4, we combine our results in section 3 along with a Morawetz inequality in a bootstrap argument to show that the scattering norm stays bounded. Thus the global well-posedness and scattering follow.

	\pagebreak

	\section{Preliminary Results}
	In this section we present some of the preliminary results we will need later.
	\subsection{Analysis on $\mathbb{H}^3$}
	We shall define the hyperbolic space through the following model. We consider the standard Minkowski space $\mathbb{R}^{3+1}$ endowed with the metric $-dx_0^2+dx_1^2+dx_2^2+dx_3^2$ and the bilinear form $[x,y]=x_0y_0-x_1y_1-x_2y_2-x_3y_3$. Hyperbolic space $\mathbb{H}^3$ is defined as the submanifold satisfying $[x,x]=1$ whose metric is induced from the Minkowski space. \\
	We shall also use the following polar coordinates on the hyperbolic space. We use the pair $(r,\omega)\in \mathbb{R}\times \mathbb{R}^2$ to represent the point $(coshr,sinhr\omega)$ in the previous model of $\mathbb{H}^3$. r represents the distance from the point $(coshr,sinhr\omega)$ to the origin (1,0,0,0). \\
	The metric in the polar coordinates takes the form
	\begin{equation*}
	    g_{\mathbb{H}^3}=dr^2+sinh^2rd\omega^2	
	\end{equation*}
	And integrals shall be computed by
	\begin{equation*}
	    \int_{\mathbb{H}^3}	fd\mu=\int_0^\infty\int_{\mathbb{S}^2}f(r,\omega)sinh^2rdrd\omega
	\end{equation*}

    Similar to the Euclidean case, fourier transform can be defined for suitable functions. See \cite{helgason2001differential} for example. In fact, fourier transform on $\mathbb{H}^3$ takes functions on $\mathbb{H}^3$ to functions defined on $(\lambda,\theta)\in\mathbb{R}\times\mathbb{S}^2$ through the following transform and inversion formula:
    \begin{equation*}
    	\hat{f}(\lambda,\theta)=\int_{\mathbb{H}^3}f(x)[x,b(\omega)]^{i\lambda-1}d\mu(x)
    \end{equation*}
    \begin{equation*}
    	f(x)=\int_0^\infty\int_{\mathbb{S}^2}\tilde{f}(\lambda,\theta)[x,b(\omega)]^{-i\lambda-1}\frac{d\lambda d\theta}{|c(\lambda)|^2}
    \end{equation*}
    where $b(\omega)=[1,\omega]\in\mathbb{R}^4$ and 
    the Harish-Chandra function $c(\lambda)$ is defined by
    \begin{equation*}
    	c(\lambda)=C\frac{\Gamma(i\lambda)}{\Gamma(i\lambda+1)}
    \end{equation*}
    
    For $L^2$ integrable function, the following Plancherel formula holds:
    \begin{equation*}
    	\int_{\mathbb{H}^3}f_1(x)\overline{f_2(x)}d\mu=\int_0^\infty\int_{\mathbb{S}^2}\hat{f}_1(\lambda,\theta)\overline{\hat{f}_2(\lambda,\theta)}\frac{d\lambda d\theta}{|c(\lambda)|^2}
    \end{equation*}

    The Laplace-Beltrami operator is given in the polar form by
    \begin{equation*}
    	\Delta_{\mathbb{H}^3}=\partial_r^2+\frac{2coshr}{sinhr}\partial_r+\frac{1}{sinh^2r}\Delta_{\mathbb{S}^2}
    \end{equation*}

    $\Delta_{\mathbb{H}^3}$ is strictly positive definite. The spectrum of $\Delta_{\mathbb{H}^3}$ is $[1,\infty)$. In fact, the fourier transform of Laplace operator is given by
    \begin{equation}
        \widehat{(-\Delta_{\mathbb{H}^3}f)}=(\lambda^2+1)\hat{f}	
    \end{equation}
    Thus, we have the following Poincar\'e inequality
    \begin{prop}
    For $\alpha<\beta$, we have
    \begin{equation}
    	\|(-\Delta_{\mathbb{H}^3})^\frac{\alpha}{2}f\|\lesssim \|(-\Delta_{\mathbb{H}^3})^\frac{\beta}{2}f\|
    \end{equation}    	
    \end{prop}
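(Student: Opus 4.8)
The plan is to reduce the operator inequality to an elementary pointwise comparison on the Fourier side via the Plancherel formula. By the stated identity $\widehat{(-\Delta_{\mathbb{H}^3}f)}=(\lambda^2+1)\hat f$, the Laplacian acts as multiplication by $\lambda^2+1$ on the Fourier transform side, and the fractional powers $(-\Delta_{\mathbb{H}^3})^{\frac{\alpha}{2}}$ are defined through the functional calculus so that their Fourier symbol is $(\lambda^2+1)^{\frac{\alpha}{2}}$. Throughout I take $\|\cdot\|$ to denote the $L^2(\mathbb{H}^3)$ norm, so that the Plancherel formula applies.

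First I would apply Plancherel to express both sides as weighted $L^2$ integrals with a common density. This gives
\begin{equation*}
\|(-\Delta_{\mathbb{H}^3})^{\frac{\alpha}{2}}f\|^2=\int_0^\infty\int_{\mathbb{S}^2}(\lambda^2+1)^{\alpha}\,|\hat f(\lambda,\theta)|^2\,\frac{d\lambda\,d\theta}{|c(\lambda)|^2},
\end{equation*}
and the same identity with $\alpha$ replaced by $\beta$. The comparison of the two operator norms is thereby converted into the comparison of two integrals against the identical nonnegative density $|\hat f(\lambda,\theta)|^2\,|c(\lambda)|^{-2}$.

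The key step is the pointwise bound on the symbols. Since the spectrum of $-\Delta_{\mathbb{H}^3}$ is $[1,\infty)$ — equivalently, since $\lambda^2+1\geq 1$ for every real $\lambda$ — and since $\alpha<\beta$, we have $(\lambda^2+1)^{\alpha}\leq(\lambda^2+1)^{\beta}$ for all $\lambda$: the ratio equals $(\lambda^2+1)^{\beta-\alpha}$, a number with base at least $1$ and positive exponent, hence at least $1$. Integrating this inequality against the nonnegative density yields
\begin{equation*}
\|(-\Delta_{\mathbb{H}^3})^{\frac{\alpha}{2}}f\|^2\leq\|(-\Delta_{\mathbb{H}^3})^{\frac{\beta}{2}}f\|^2,
\end{equation*}
which is exactly the claimed inequality, in fact with implicit constant $1$.

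I expect no genuine obstacle here. The only conceptual point worth flagging is that the inequality holds on $\mathbb{H}^3$ with no additional low-frequency correction term (in contrast to the Euclidean case, where $|\xi|^{\alpha}\lesssim|\xi|^{\beta}$ fails near $\xi=0$): this is precisely an artifact of the spectral gap, namely that the spectrum begins at $1$ rather than at $0$, so the weight $\lambda^2+1$ is bounded below by $1$ and the higher power always dominates. The computation is otherwise routine once the Plancherel identity and the symbol of the fractional Laplacian are in place.
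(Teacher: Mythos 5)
Your proof is correct and is precisely the argument the paper intends: it states the proposition immediately after recording the Fourier symbol $(\lambda^2+1)$ of $-\Delta_{\mathbb{H}^3}$ and the spectral lower bound, leaving the Plancherel reduction and the pointwise symbol comparison $(\lambda^2+1)^{\alpha}\leq(\lambda^2+1)^{\beta}$ implicit. Your observation that the spectral gap is exactly what makes this work without a low-frequency correction (unlike the Euclidean case) matches the paper's reason for stating the inequality in this form.
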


	\subsection{Heat-Flow Based Frequency Projection}\hfill \\
	On Euclidean space, we define operators of the form $\chi(-\Delta)$, where $\chi$ is a compactly supported bump function to localize frequency. But unfortunately, operators defined in this way are not $L^p$ bounded on general manifolds. As such, we use the following heat-flow based operator as a substitution of the usual Littlewood Paley operators. The following definition and Bernstein inequalities are the same as in \cite{lawrie2018local}.

	\begin{equation} \label{LPO}
		\begin{aligned}
			&P_{\geq s}f=e^{s\Delta}f\\
			&P_{<s}f=f-P_{\geq s}f\\
			&P_sf=(-s\Delta)e^{s\Delta}f
		\end{aligned}
	\end{equation}
	
	\begin{prop}
	The operators defined in (\ref{LPO}) are bounded on $L^p(\mathbb{H}^3)$.
	\end{prop}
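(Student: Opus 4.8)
The plan is to realize each of the three operators as convolution against a radial kernel on $\mathbb{H}^3$ and to deduce $L^p$ boundedness from a uniform $L^1$ bound on that kernel via Young's inequality. Since $\mathbb{H}^3$ is a rank-one symmetric space, the heat semigroup $e^{s\Delta}$ acts by convolution with the radial heat kernel, which has the explicit form $h_s(r)=(4\pi s)^{-3/2}\,\tfrac{r}{\sinh r}\,e^{-s-r^2/(4s)}$ up to a fixed constant. Two structural facts drive the argument: $h_s$ is strictly positive, and $\int_{\mathbb{H}^3}h_s\,d\mu=1$ for every $s>0$ (the semigroup generated by the geometer's Laplacian is stochastically complete and positivity-preserving, hence submarkovian). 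Here the spectral shift $\rho^2=1$ is responsible for the $e^{-s}$ factor, which is reconciled with the unit mass by the $\tfrac{r}{\sinh r}$ weight against the volume element $\sinh^2 r\,dr$.

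First I would treat $P_{\ge s}=e^{s\Delta}$. By Young's inequality on $\mathbb{H}^3$ we get $\|e^{s\Delta}f\|_{L^p}\le\|h_s\|_{L^1}\|f\|_{L^p}=\|f\|_{L^p}$, so $P_{\ge s}$ is a contraction on every $L^p$, $1\le p\le\infty$; equivalently this follows from Riesz--Thorin interpolation between the $L^1$ and $L^\infty$ contraction bounds for a submarkovian semigroup. The operator $P_{<s}=I-P_{\ge s}$ is then bounded by the triangle inequality, with norm at most $1+\|e^{s\Delta}\|_{L^p\to L^p}\le 2$.

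The substantive case is $P_s=-s\Delta e^{s\Delta}$, and this is where I expect the main difficulty. Since $\partial_s e^{s\Delta}=\Delta e^{s\Delta}$, the operator $P_s$ is convolution against the radial kernel $k_s(r):=-s\,\partial_s h_s(r)$, and by Young's inequality it suffices to prove
\begin{equation*}
\sup_{s>0}\ \|k_s\|_{L^1(\mathbb{H}^3)}=\sup_{s>0}\ \int_0^\infty \bigl|s\,\partial_s h_s(r)\bigr|\,\sinh^2 r\,dr<\infty .
\end{equation*}
Differentiating the explicit formula for $h_s$ in $s$ produces a sum of terms carrying the factors $1$, $s$, and $r^2/s$ against the Gaussian profile $e^{-r^2/(4s)}$ and the weight $\tfrac{r}{\sinh r}$. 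I would bound the integral by splitting into the regimes $s\le 1$ and $s\ge 1$: for small $s$ the kernel concentrates near $r=0$, where $\tfrac{r}{\sinh r}\approx 1$, $e^{-s}\approx 1$, and $\sinh^2 r\,dr$ is comparable to $r^2\,dr$, so the estimate reduces to the classical Euclidean computation $\int_0^\infty \bigl|s\,\partial_s\!\bigl(s^{-3/2}e^{-r^2/4s}\bigr)\bigr|\,r^2\,dr\lesssim 1$; for large $s$ the combined decay from $\tfrac{r}{\sinh r}$, $e^{-r^2/(4s)}$ and $e^{-s}$ keeps every term integrable uniformly. Carrying out this case analysis yields the displayed uniform bound, and hence $L^p$ boundedness of $P_s$ for all $1\le p\le\infty$.

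Alternatively, one may bypass the explicit kernel manipulation by invoking a spectral multiplier theorem on $\mathbb{H}^3$: in the Fourier variable $P_s$ acts as multiplication by $m_s(\lambda)=s(\lambda^2+1)e^{-s(\lambda^2+1)}$, and with $\tau=s(\lambda^2+1)$ one checks that $m_s$ and its rescaled derivatives $(\lambda\partial_\lambda)^k m_s$ are universal bounded functions of $\tau$ of the form $P_k(\tau)e^{-\tau}$, hence bounded uniformly in $s$. The Mikhlin--H\"ormander multiplier theorem on hyperbolic space then gives $L^p$ boundedness for $1<p<\infty$ with a bound independent of $s$. Either route suffices; I would present the kernel estimate, since it also covers the endpoints $p=1,\infty$ and matches the self-contained treatment in \cite{lawrie2018local}.
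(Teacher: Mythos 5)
Your handling of $P_{\ge s}$ and $P_{<s}$ is correct: the hyperbolic heat kernel is positive with unit mass, so $e^{s\Delta}$ is a contraction on every $L^p$ and $P_{<s}=I-P_{\ge s}$ follows by the triangle inequality. (For what it is worth, the paper offers no proof at all here; it simply imports the statement from \cite{lawrie2018local}.) The problem is the centerpiece of your argument for $P_s$: the claimed uniform bound $\sup_{s>0}\|s\,\partial_s h_s\|_{L^1(\mathbb{H}^3)}<\infty$ is false. From your own formula,
\begin{equation*}
s\,\partial_s h_s(r)=h_s(r)\Bigl(\tfrac{r^2}{4s}-s-\tfrac{3}{2}\Bigr),
\qquad
h_s(r)\,4\pi\sinh^2 r\,dr=(4\pi s)^{-3/2}\,4\pi\, r\sinh r\, e^{-s-r^2/(4s)}\,dr ,
\end{equation*}
and for large $s$, writing $\sinh r\approx e^r/2$ and $e^{r-s-r^2/(4s)}=e^{-(r-2s)^2/(4s)}$, the measure $h_s\,d\mu$ concentrates unit mass in a window of width $\sim\sqrt{s}$ around $r=2s$: the hyperbolic heat kernel is \emph{ballistic}, not centered at $r=0$. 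On that window the factor $\bigl|\tfrac{r^2}{4s}-s-\tfrac32\bigr|=\bigl|\tfrac{(r-2s)(r+2s)}{4s}-\tfrac32\bigr|\approx|r-2s|$ has average size $\sqrt{s}$, so $\|s\,\partial_s h_s\|_{L^1(\mathbb{H}^3)}\sim\sqrt{s}\to\infty$. Your statement that for large $s$ the decay of $\tfrac{r}{\sinh r}$, $e^{-r^2/(4s)}$ and $e^{-s}$ ``keeps every term integrable uniformly'' overlooks exactly the $\sinh^2 r$ volume growth that cancels those factors. Moreover, since the $L^1\to L^1$ norm of convolution with a radial kernel \emph{equals} the kernel's $L^1$ norm (test against an approximate identity), this is not a defect of the method: uniform $L^1$ (and by duality $L^\infty$) boundedness of $P_s$ genuinely fails on $\mathbb{H}^3$, so the endpoint claim you advertise as the advantage of the kernel route is not true uniformly in $s$.

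Two repairs are available, and the distinction matters because the Bernstein inequalities (Prop.\ \ref{BernIneq}) carry constants independent of $s$. First, if one only needs $s$ in a bounded range --- and the paper only ever uses $s=s_0\le 1$, with $s_0\to 0$ --- your kernel computation does close on $0<s\le 1$, where the Gaussian dominates the volume growth and the estimate is genuinely Euclidean-like; one should then state the proposition for $s\le 1$. Second, for all $s>0$ and $1<p<\infty$ the clean argument is not a kernel bound but Stein's analyticity theorem for symmetric Markov semigroups, which gives $\sup_{s>0}\|s\Delta e^{s\Delta}\|_{L^p\to L^p}\le C_p$ directly. Your fallback via a multiplier theorem can also be made rigorous, but not as written: on $\mathbb{H}^3$ a radial $L^p$-multiplier with $p\neq 2$ must extend holomorphically to the strip $|\mathrm{Im}\,\lambda|<|\tfrac2p-1|$ (Clerc--Stein necessity; Stanton--Tomas and Anker for sufficiency), so checking real-variable Mikhlin conditions on $(\lambda\partial_\lambda)^k m_s$ proves nothing by itself. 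The multiplier $m_s(\lambda)=s(\lambda^2+1)e^{-s(\lambda^2+1)}$ does satisfy the required estimates on any strip $|\mathrm{Im}\,\lambda|\le\tau<1$, uniformly in $s$, because there $\mathrm{Re}\,(\lambda^2+1)\gtrsim_\tau 1+(\mathrm{Re}\,\lambda)^2>0$; but that complex-strip verification is the step your proposal omits, and it is precisely the step where hyperbolic space differs from $\mathbb{R}^3$.
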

		
	Heuristically, $P_{\geq s}$ can be viewed as frequency localization to frequency $\leq s^{-\frac{1}{2}}$, whereas $P_s$ can be viewed as the frequency localization to frequency $\sim s^{-\frac{1}{2}}$, in the sense that they satisfy the following Bernstein inequalities:
	
	\begin{prop}[Bernstein Inequality]\label{BernIneq}
		\begin{equation*}
			\begin{aligned}
				&\|P_{<s}f\|_{L^p_x}\lesssim s^\frac{1}{2}\|\nabla f\|_{L^p_x}\\
				&\|\nabla P_{\geq s}f\|_{L^p_x} \lesssim s^{-\frac{1}{2}}\|f\|_{L^p_x}\\
				&\|\nabla P_{s}f\|_{L^p_x} \sim s^{-\frac{1}{2}}\|f\|_{L^p_x}
			\end{aligned}
		\end{equation*}
		
	\end{prop}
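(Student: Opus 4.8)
The plan is to reduce all three estimates to a short list of $L^p\to L^p$ mapping bounds for operators built from the heat semigroup $e^{\tau\Delta}$, and to extract those bounds from the explicit heat kernel on $\mathbb{H}^3$. The kernel of $e^{\tau\Delta}$ is the radial function $p_\tau(r)=(4\pi\tau)^{-3/2}e^{-\tau}\frac{r}{\sinh r}e^{-r^2/4\tau}$ with $r=d(x,y)$, which is positive and has total mass one, $\int_{\mathbb{H}^3}p_\tau(d(x,y))\,d\mu(y)=1$. Differentiating this formula and integrating against $\sinh^2 r\,dr\,d\omega$, I would first record the three semigroup estimates
\begin{equation*}
\|e^{\tau\Delta}\|_{L^p\to L^p}\lesssim 1,\qquad \|\nabla e^{\tau\Delta}\|_{L^p\to L^p}\lesssim\tau^{-1/2},\qquad \|\Delta e^{\tau\Delta}\|_{L^p\to L^p}\lesssim\tau^{-1},
\end{equation*}
valid for all $\tau>0$: the first from positivity and mass one via Schur's test, and the second and third from the pointwise bounds $\int|\partial_r p_\tau|\,d\mu\lesssim\tau^{-1/2}$ and $\int|\Delta p_\tau|\,d\mu=\int|\partial_\tau p_\tau|\,d\mu\lesssim\tau^{-1}$, again by Schur's test using the symmetry of the kernel.

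With these in hand the second and third estimates are short. Since $P_{\geq s}=e^{s\Delta}$, the bound $\|\nabla P_{\geq s}f\|_{L^p}\lesssim s^{-1/2}\|f\|_{L^p}$ is just the gradient estimate at $\tau=s$. For $P_s=-s\Delta e^{s\Delta}$ I would factor $\nabla\Delta e^{s\Delta}=(\nabla e^{(s/2)\Delta})(\Delta e^{(s/2)\Delta})$ and multiply the two bounds to get $\|\nabla\Delta e^{s\Delta}\|_{L^p\to L^p}\lesssim s^{-3/2}$; the prefactor $s$ then yields $\|\nabla P_sf\|_{L^p}\lesssim s^{-1/2}\|f\|_{L^p}$. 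The reverse inequality implicit in the symbol $\sim$ cannot hold for every $f$ (it already fails on constants, for which $P_sf=0$); I would read it as the two-sided equivalence on functions localized at frequency $\sim s^{-1/2}$, e.g. $f=P_sg$, where the lower bound comes from the spectral concentration of $P_s$.

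For the first estimate I would use the fundamental theorem of calculus in the heat time,
\begin{equation*}
P_{<s}f=f-e^{s\Delta}f=\int_0^s(-\Delta)e^{\tau\Delta}f\,d\tau,
\end{equation*}
and split each integrand as $(-\Delta)e^{\tau\Delta}=\big((-\Delta)^{1/2}e^{\tau\Delta}\big)(-\Delta)^{1/2}$. Using the fractional analogue $\|(-\Delta)^{1/2}e^{\tau\Delta}\|_{L^p\to L^p}\lesssim\tau^{-1/2}$ of the estimates above, together with the reverse Riesz inequality $\|(-\Delta)^{1/2}f\|_{L^p}\lesssim\|\nabla f\|_{L^p}$ on $\mathbb{H}^3$, I would bound
\begin{equation*}
\|P_{<s}f\|_{L^p}\leq\int_0^s\|(-\Delta)^{1/2}e^{\tau\Delta}\|_{L^p\to L^p}\,\|(-\Delta)^{1/2}f\|_{L^p}\,d\tau\lesssim\Big(\int_0^s\tau^{-1/2}\,d\tau\Big)\|\nabla f\|_{L^p}\lesssim s^{1/2}\|\nabla f\|_{L^p}.
\end{equation*}

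The heart of the matter, and the step I expect to cost the most, is the gradient semigroup bound $\|\nabla e^{\tau\Delta}\|_{L^p\to L^p}\lesssim\tau^{-1/2}$ together with the Riesz comparison between $(-\Delta)^{1/2}$ and $\nabla$: this is precisely where the curvature of $\mathbb{H}^3$ intervenes, since on a curved manifold $\nabla$ and $e^{\tau\Delta}$ do not commute. Conceptually the obstruction is visible in the Bochner--Weitzenb\"ock identity $\nabla e^{\tau\Delta}=e^{\tau\Delta_{(1)}}\nabla$, where $\Delta_{(1)}$ is the Hodge Laplacian on $1$-forms and carries the Ricci term $\mathrm{Ric}=-2g$; the resulting curvature factor $e^{2\tau}$ means that naive commutation only controls small heat times, and for large $\tau$ one must instead lean on the spectral gap $-\Delta\geq 1$ (equivalently the Poincar\'e inequality). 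Working directly from the explicit kernel $p_\tau$, as proposed above, sidesteps the $1$-form semigroup and gives all the bounds uniformly in $\tau$, so the genuine labor is the concrete estimation of $\int|\partial_r p_\tau|\,d\mu$ and $\int|\partial_\tau p_\tau|\,d\mu$ from the closed form of $p_\tau$, together with the ($L^p$) boundedness of the Riesz transform on hyperbolic space.
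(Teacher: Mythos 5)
The paper never proves this proposition --- it simply imports it from \cite{lawrie2018local} --- so your attempt can only be judged on its own terms, and it contains one genuine gap: the claim that the Schur-test integrals of the differentiated heat kernel satisfy $\int|\partial_r p_\tau|\,d\mu\lesssim\tau^{-1/2}$ and $\int|\partial_\tau p_\tau|\,d\mu\lesssim\tau^{-1}$ \emph{uniformly in $\tau>0$}, so that the kernel computation ``sidesteps'' the large-time problem. This is false for $\tau\gtrsim 1$. From your explicit formula, $\partial_r p_\tau=p_\tau\bigl(\tfrac1r-\coth r-\tfrac{r}{2\tau}\bigr)$, and the radial density of the heat measure is
\begin{equation*}
p_\tau\,\sinh^2 r\,dr \;\approx\; C\,\tau^{-3/2}\,r\,e^{-(r-2\tau)^2/(4\tau)}\,dr\qquad(r\ \text{large}),
\end{equation*}
i.e.\ the mass concentrates on the sphere $r\approx 2\tau$ (ballistic escape of Brownian motion on $\mathbb{H}^3$). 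On that bulk $\coth r-\tfrac1r\approx 1$ and $\tfrac{r}{2\tau}\approx 1$, so $|\partial_r p_\tau|\approx 2\,p_\tau$ there and $\int|\partial_r p_\tau|\,d\mu\to 2$, not $O(\tau^{-1/2})$; similarly $\partial_\tau p_\tau=p_\tau\bigl(-\tfrac{3}{2\tau}-1+\tfrac{r^2}{4\tau^2}\bigr)$ has size $\tau^{-1/2}p_\tau$ on the bulk (where $r=2\tau+O(\sqrt\tau)$), so $\int|\partial_\tau p_\tau|\,d\mu\sim\tau^{-1/2}$, not $O(\tau^{-1})$. This is not merely a lossy test: since the $L^\infty\to L^\infty$ norm of a kernel operator is the supremum of the $L^1$ norms of its kernel, $\|\nabla e^{\tau\Delta}\|_{L^\infty\to L^\infty}$ stays bounded away from $0$ as $\tau\to\infty$, so your three semigroup bounds --- and with them the stated Bernstein inequalities for large $s$ --- are genuinely false at $p=\infty$ (and $p=1$). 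The proposition can only hold for $1<p<\infty$, which is the range in the cited reference and all the paper uses.

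The repair is exactly the spectral-gap mechanism that your closing paragraph names and then dismisses as avoidable: it is not avoidable, but it is cheap. Your kernel computation is correct and sufficient for $\tau\le 1$ (there $\coth r-\tfrac1r\lesssim\min(r,1)$ and Gaussian moments give the $\tau^{-1/2}$ and $\tau^{-1}$ rates). For $\tau\ge 1$ and $1<p<\infty$, interpolate the $L^2$ bound $\|e^{\tau\Delta}\|_{L^2\to L^2}\le e^{-\tau}$ (spectrum $[1,\infty)$) against the trivial $L^1$ and $L^\infty$ bounds to get $\|e^{\tau\Delta}\|_{L^p\to L^p}\lesssim e^{-c_p\tau}$ with $c_p>0$, and then factor $\nabla e^{\tau\Delta}=(\nabla e^{\Delta})\,e^{(\tau-1)\Delta}$ and $\Delta e^{\tau\Delta}=(\Delta e^{\Delta})\,e^{(\tau-1)\Delta}$; exponential decay beats any power, and all three semigroup bounds follow uniformly in $\tau$ on this range of $p$. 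With that amendment the rest of your argument is sound: the reduction $f-e^{s\Delta}f=\int_0^s(-\Delta)e^{\tau\Delta}f\,d\tau$, the subordination bound for $\|(-\Delta)^{1/2}e^{\tau\Delta}\|_{L^p\to L^p}$, the factorization for $P_s$, and the reverse Riesz inequality $\|(-\Delta)^{1/2}f\|_{L^p}\lesssim\|\nabla f\|_{L^p}$ --- though the last is itself a nontrivial theorem on $\mathbb{H}^3$ (valid precisely for $1<p<\infty$) that you should cite rather than treat as elementary. Your observation that the third estimate cannot literally hold with $\sim$ for arbitrary $f$ (it fails for constants) is correct; the two-sided statement must be read on frequency-localized inputs, and the paper's phrasing inherits this imprecision from the reference.
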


	\subsection{Strichartz Estimates}
	We will use the Strichartz estimates obtained in \cite{metcalfe2011nonlinear} for wave equation on the 3 dimensional hyperbolic space. Define $\mathcal{R}$ to be the set of $(p,q,\gamma)$ such that
	\begin{equation*}
		\begin{aligned}
		   &\frac{1}{p}+\frac{1}{q}\leq\frac{1}{2}\\
		   &p,q\geq 2\\
		   &\gamma=\frac{3}{2}-\frac{1}{p}-\frac{3}{q}
		\end{aligned}
	\end{equation*}	
	and $\mathcal{E}$ to be the set of $(p,q,\gamma)$
	\begin{equation*}
		\begin{aligned}
		    &\frac{1}{2}-\frac{1}{p}\leq\frac{1}{q}\leq\frac{1}{2}-\frac{1}{3p}\ and\ p>2\\
		    or\ &0<\frac{1}{q}<\frac{1}{3}\ and\ p=2\\
		    &\gamma=1-\frac{2}{q}
		\end{aligned}
	\end{equation*}
    We have the following Strichartz inequalities
    \begin{thm}[Strichartz estimates in \cite{metcalfe2011nonlinear}]\label{thmStric}
    Suppose $(p,q,\gamma)\in\mathcal{R}\cup \mathcal{E}$, then the mapping defined by
    \begin{equation}\label{eq:Stric01}
      \begin{aligned}
          &T:H^\gamma(\mathbb{H}^3)\rightarrow L^p_tL^q_x(\mathbb{R}\times \mathbb{H}^3)\\
          &Tf(t,x)=e^{it\sqrt{-\Delta}}f(x)
      \end{aligned}
    \end{equation}
    or equivalently
    \begin{equation}\label{eq:Stric02}
      \begin{aligned}
          &T^*:L^{p'}_tL^{q'}_x(\mathbb{R}\times \mathbb{H}^3)\rightarrow H^{-\gamma}(\mathbb{H}^3)\\
          &T^*F(x)=\int_{-\infty}^{+\infty}e^{-it\sqrt{-\Delta}}F(t,x)dt
      \end{aligned}
    \end{equation}
    is bounded.
    \end{thm}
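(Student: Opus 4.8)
Since Theorem \ref{thmStric} is quoted from \cite{metcalfe2011nonlinear}, no proof is needed here; I instead sketch the route by which such estimates are established on $\mathbb{H}^3$, as this illuminates why the admissible set is larger than in the Euclidean case. The plan is to reduce the family of $L^p_tL^q_x$ bounds to a single frequency-localized \emph{dispersive estimate} and then invoke the abstract $TT^*$ argument of Keel--Tao. First I would spectrally localize the propagator: writing $U(t)=e^{it\sqrt{-\Delta_{\mathbb{H}^3}}}$ and choosing a smooth bump $\chi$ supported near a dyadic scale $2^j$, the operator $U(t)\chi(\sqrt{-\Delta_{\mathbb{H}^3}})$ is a hyperbolic convolution against a radial kernel. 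Using the explicit spherical function on $\mathbb{H}^3$, namely $\varphi_\lambda(r)=\frac{\sin(\lambda r)}{\lambda\sinh r}$, together with the spectral identity $\widehat{(-\Delta_{\mathbb{H}^3})f}=(\lambda^2+1)\hat f$ and the Plancherel density $|c(\lambda)|^{-2}\sim\lambda^2$ (which follows from $c(\lambda)=C\,\Gamma(i\lambda)/\Gamma(i\lambda+1)=C/(i\lambda)$), the kernel takes the form
\begin{equation*}
  K_{t,j}(r)=\frac{c}{\sinh r}\int_0^\infty e^{it\sqrt{\lambda^2+1}}\,\tilde\chi_j(\lambda)\,\lambda\sin(\lambda r)\,d\lambda .
\end{equation*}

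Second, I would estimate $\|K_{t,j}\|_{L^\infty_x}$ by stationary phase. Splitting $\sin(\lambda r)$ into exponentials gives oscillatory integrals with phase $t\sqrt{\lambda^2+1}\pm\lambda r$; van der Corput applied in $\lambda$ produces the decay in $t$, while the prefactor $1/\sinh r$ supplies an additional decay in the spatial variable that has no Euclidean analogue. It is this extra spatial decay --- equivalently, the exponential volume growth of $\mathbb{H}^3$ --- that yields a faster effective dispersive rate at large times and is ultimately responsible for enlarging the admissible region beyond the wave-admissible set $\mathcal{R}$ to include $\mathcal{E}$. The resulting bound naturally has two regimes: for small $|t|$ the kernel behaves like its three-dimensional Euclidean counterpart (of order $|t|^{-1}$ after summing dyadically), whereas for large $|t|$ the decay is strictly faster.

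Third, I would feed the pair of estimates $\|U(t)\|_{L^2_x\to L^2_x}=1$ (from the Plancherel formula and the self-adjointness of $\sqrt{-\Delta_{\mathbb{H}^3}}$) and the dispersive bound on $\|U(t)U(s)^*\chi^2(\sqrt{-\Delta_{\mathbb{H}^3}})\|_{L^1_x\to L^\infty_x}$ into the Keel--Tao $TT^*$ framework to obtain the Strichartz estimates for the range $\mathcal{R}$ after summation in $j$. To capture the genuinely hyperbolic range $\mathcal{E}$ --- the off-diagonal pairs permitted by the stronger dispersion --- I would treat the large-time contribution separately using the Kunze--Stein phenomenon on $\mathbb{H}^3$, i.e.\ the convolution inequality $L^{q'}(\mathbb{H}^3)\ast L^2(\mathbb{H}^3)\hookrightarrow L^2(\mathbb{H}^3)$ for $q'<2$, which replaces the Euclidean Young inequality and provides the needed gain.

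The main obstacle is the uniform-in-$j$ stationary phase analysis of $K_{t,j}$ together with the correct bookkeeping of the two time regimes: one must track how the $t$-decay from van der Corput interacts with the $1/\sinh r$ factor, establish dispersive bounds that remain summable over the dyadic frequency scales, and then combine the small-time (Euclidean-type) estimate with the large-time (Kunze--Stein) estimate so as to reach \emph{every} pair in $\mathcal{E}$ rather than only those in $\mathcal{R}$. The endpoint cases, where the Keel--Tao argument is delicate, require the most care.
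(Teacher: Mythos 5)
Your treatment matches the paper's: Theorem \ref{thmStric} is imported directly from \cite{metcalfe2011nonlinear}, and the paper likewise gives no proof, only the citation (the only argument supplied in the paper is for the subsequent corollary, via the solution formula and the Christ--Kiselev lemma, which is a separate statement). Your accompanying sketch --- frequency-localized dispersive estimates built from the spherical function $\frac{\sin(\lambda r)}{\lambda \sinh r}$ and the Plancherel density, the Keel--Tao $TT^*$ machinery for the range $\mathcal{R}$, and Kunze--Stein-type gains for the genuinely hyperbolic range $\mathcal{E}$ --- is a faithful outline of how the cited literature obtains the result, so nothing further is required.
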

    Note that the solution of \ref{eq:01} is given by
    \begin{equation}\label{eq:duh}
        u(t,x)=cos(t\sqrt{-\Delta})u_0+\frac{sin(t\sqrt{-\Delta})}{\sqrt{-\Delta}}u_1+\int_{s<t}\frac{sin((t-s)\sqrt{-\Delta})}{\sqrt{-\Delta}}f(s,x)ds
    \end{equation}
    Strichartz estimate for the solution follows from (\ref{thmStric}) and (\ref{eq:duh})
    \begin{cor}
    \hfill Let $I\subset\mathbb{R}$ and $t_0\in I$. Suppose $(p,q,\gamma)\in\mathcal{R}$, then we have the following estimate for the solution of \ref{eq:01}
    \begin{equation}
        \|u\|_{L^p_tL^q_x(I)}+\|u\|_{L^\infty_tH^\gamma_x(I)}+\|u_t\|_{L^\infty_tH^{\gamma-1}_x(I)}\lesssim_{p,q,\gamma}\|u_0\|_{H^\gamma_x}+\|u_1\|_{H^{\gamma-1}}+\|f\|_{L^{p'}_tL^{q'}_x(I)}	
    \end{equation}

    \end{cor}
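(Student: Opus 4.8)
The plan is to insert the Duhamel representation (\ref{eq:duh}) into each of the three norms on the left and to estimate the homogeneous and inhomogeneous contributions separately, in every case reducing matters to the half-wave propagator $U(t):=e^{it\sqrt{-\Delta}}$ and to Theorem \ref{thmStric} together with its dual form (\ref{eq:Stric02}).

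First, for the homogeneous part I would write $\cos(t\sqrt{-\Delta})=\tfrac12\bigl(U(t)+U(-t)\bigr)$ and $\frac{\sin(t\sqrt{-\Delta})}{\sqrt{-\Delta}}=\frac{1}{2i}(\sqrt{-\Delta})^{-1}\bigl(U(t)-U(-t)\bigr)$. Applying (\ref{eq:Stric01}) to each $U(\pm t)$ piece immediately bounds the $L^p_tL^q_x$ norm of $\cos(t\sqrt{-\Delta})u_0$ by $\|u_0\|_{H^\gamma}$; the extra factor $(\sqrt{-\Delta})^{-1}$ in the $u_1$ term trades one derivative, so that $\frac{\sin(t\sqrt{-\Delta})}{\sqrt{-\Delta}}u_1$ is controlled by $\|(\sqrt{-\Delta})^{-1}u_1\|_{H^\gamma}=\|u_1\|_{H^{\gamma-1}}$, exactly the regularity appearing on the right-hand side. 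The energy pieces $\|u\|_{L^\infty_tH^\gamma_x}$ and $\|u_t\|_{L^\infty_tH^{\gamma-1}_x}$ require no Strichartz input: since $|\cos|,|\sin|\le 1$, the Plancherel formula shows that $\cos(t\sqrt{-\Delta})$ and $\sin(t\sqrt{-\Delta})$ are bounded, uniformly in $t$, on every $L^2$-based space $H^\sigma$, and differentiating (\ref{eq:duh}) in $t$ gives $u_t=-\sqrt{-\Delta}\sin(t\sqrt{-\Delta})u_0+\cos(t\sqrt{-\Delta})u_1+\cdots$, which accounts for the shift to $H^{\gamma-1}$.

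Next, for the inhomogeneous term I would first prove the \emph{non-retarded} estimate by a $TT^*$ argument: after expanding the sine into $U(\pm(t-s))$, the operator $f\mapsto\int_{\mathbb R}\frac{\sin((t-s)\sqrt{-\Delta})}{\sqrt{-\Delta}}f(s)\,ds$ factors through the composition of the forward estimate (\ref{eq:Stric01}) with its adjoint (\ref{eq:Stric02}), hence is bounded from $L^{p'}_tL^{q'}_x$ into $L^p_tL^q_x$. Here the smoothing factor $(\sqrt{-\Delta})^{-1}$ and the derivative bookkeeping are reconciled using the spectral gap $\sqrt{-\Delta}\ge 1$ of $\mathbb H^3$, which makes the negative powers $(\sqrt{-\Delta})^{-a}$, $a\ge 0$, contractions on $L^2$; in the principal case $\gamma=\tfrac12$ (for instance $p=q=4$, giving the $L^4_{t,x}$ norm used later) the derivative counts balance exactly. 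The corresponding energy bound for the Duhamel term is obtained dually, pairing against a test function and invoking (\ref{eq:Stric02}) together with the unitarity of $U(t)$ on $L^2$ and Minkowski's inequality in time.

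The main obstacle is passing from the non-retarded integral over all of $\mathbb R$ to the genuinely retarded integral $\int_{s<t}$ appearing in (\ref{eq:duh}). This is where I would invoke the Christ--Kiselev lemma, whose hypothesis $p>p'$ holds throughout $\mathcal R$: the constraint $\frac1p+\frac1q\le\frac12$ with $q\ge 2$ forces $\frac1p<\frac12$, hence $p>2>p'$ strictly, so the endpoint obstruction never arises for triples in $\mathcal R$. Once the retarded $L^p_tL^q_x$ bound is in hand, combining it with the homogeneous and energy estimates above and taking a supremum over $t_0\in I$ yields the stated inequality on an arbitrary interval $I$.
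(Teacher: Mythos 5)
Your proposal follows essentially the same route as the paper: the homogeneous terms are handled directly by (\ref{eq:Stric01}), the non-retarded Duhamel operator is bounded by a $TT^*$ factorization (the paper writes it as $W=\sqrt{-\Delta}\,T_1T_1^*$ and composes (\ref{eq:Stric01}) with (\ref{eq:Stric02})), and the Christ--Kiselev lemma converts this to the retarded integral. One small caveat: your claim that membership in $\mathcal{R}$ forces $p>2$ strictly is not quite accurate, since $(p,q)=(2,\infty)$ satisfies $\frac{1}{p}+\frac{1}{q}\leq\frac{1}{2}$ and so the Christ--Kiselev hypothesis $p>p'$ can fail at that single endpoint --- but the paper's own proof is silent on this point as well, and the triples actually used later in the paper all have $p>2$.
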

	\begin{proof}
	    The estimates for $cos(t\sqrt{-\Delta})u_0$ and $\frac{sin(t\sqrt{-\Delta})}{\sqrt{-\Delta}}u_1$ follow immediately from (\ref{eq:Stric01}).
        Consider the operator W defined by
        \begin{equation}
            WF(t,x)=\int_{-\infty}^{+\infty}\frac{sin((t-s)\sqrt{-\Delta}) }{\sqrt{-\Delta}}F(s,x)ds
        \end{equation}
	\end{proof}	
	  Let $T_1(t)=\frac{sin(t\sqrt{-\Delta})}{\sqrt{-\Delta}}$. In fact, $W=\sqrt{-\Delta}T_1T_1^*$. By (\ref{eq:Stric01}) and (\ref{eq:Stric02}), we easily check that W is bounded from $L^{p'}_tL^{q'}_x$ to $L^p_tL^q_x$. Christ-Kiselev lemma\cite{christ2001maximal} gives us the desired bound for the last piece in (\ref{eq:duh}).
	\section{Energy Increment Estimate} 
	
	In this section we apply the fourier truncation method to (\ref{eq:01}) and derive an energy estimate for the low mode. Roughly speaking, we divide the time interval I into small subintervals $I_j$, the size of which is measured by the scattering norm $\|u\|_{L^4_{t,x}}^4$. In other words, $\|u\|^4_{L^4_{t,x}}(I_j)<\epsilon $ for some constant $\epsilon>0$ small enough. We split the initial data into high and low frequency parts. High frequency data will be evolved linearly globally. On the first step $I_1$, low frequency data will be evolved by the cubic nonlinear evolution. The original solution on $I_1$ will be equal to the sum of the high mode, low mode and a correction term, which starts with zero initial data. The low mode enjoys energy conservation on $I_1$. Moreover, we prove that the correction term has finite energy on $I_1$. Thus, we may add the correction term to the low mode at the end of 
	$I_1$ and start the procedure on the next step $I_2$, and so on. Section 3.1 is devoted to local energy estimates of the correction term on small time intervals. Section 3.2 treat the solution on larger intervals by summing up the subintervals. 	\\

	\subsection{Local Estimates on Small Interval}\hfill \\
	In this subsection, we study the solution to (\ref{eq:01}) with initial data $u_0$ on a small interval $I_1$, where the scattering norm of the solution is bounded by 
	\begin{equation}
		\|u\|_{L^4_{t,x}}^4< \epsilon
	\end{equation}$0<\epsilon \ll 1$.\\
	To begin with, fix $s_0>0$ which is to be determined, we decompose the initial data $(u_0,u_1)$ by
	\begin{equation}
		\begin{aligned}
		   &(u_0,u_1)=(u_{0,hi},u_{1,hi})+(u_{0,lo},u_{1,lo})	\\
		   &u_{j,hi}=P_{\leq s_0}u_j\\
		   &u_{j,lo}=P_{>s_0}u_j
		\end{aligned}
	\end{equation}
	
	By \ref{BernIneq}, we have that
	\begin{equation}\label{eq:03}
	  \begin{aligned}
	  	  &\|(u_{0,hi},u_{1,hi})\|_{H^\nu\times H^{\nu-1}}\lesssim s_0^{\frac{1}{2}(s-\nu)} 	\|(u_0,u_1)\|_{H^s\times H^{s-1}}\\
	  	  &\|(u_{0,lo},u_{1,lo})\|_{H^1\times L^2}\lesssim s_0^{-\frac{1}{2}(1-s)} 	\|(u_0,u_1)\|_{H^s\times H^{s-1}}
	  \end{aligned}
	\end{equation} for $\nu\leq s$

	We evolve the high and low frequency data respectively by the following equation:
	
	\begin{equation}
	    \left\{\begin{aligned}
	         &\psi_{tt}-\Delta_{\mathbb{H}^3}\psi=0\\
	         &(\psi(0),\psi_t(0))=(u_{0,hi},u_{1,hi})\\
        \end{aligned}\right.
	\end{equation}

	\begin{equation}
	    \left\{\begin{aligned}
	         &\phi_{tt}-\Delta_{\mathbb{H}^3}\phi+|\phi|^3=0\\
	         &(\phi(0),\phi_t(0))=(u_{0,lo},u_{1,lo})\\
         \end{aligned}\right.
	\end{equation}	

    The correction term, denoted v, satisfies the following equation:
	\begin{equation}\label{eq:05}
	    \left\{\begin{aligned}
	         &v_{tt}-\Delta_{\mathbb{H}^3}v+|v|^3=-|u|^3+|\phi|^3+|v|^3=\mathcal{O}(\psi^3+\psi\phi^2+\psi v^2+\phi^2 v+\phi v^2)\\
	         &(v(0),v_t(0))=(0,0)\\
         \end{aligned}\right.
	\end{equation}

	By Strichartz estimates, we have 
	\begin{equation}\label{eq:04}
	   \|\psi\|_{L^4_{t,x}}\lesssim s_0^{\frac{1}{2}(s-\frac{1}{2})}	
	\end{equation}
    $\phi$ has conserved energy on $I_1$. In fact,
    \ref{eq:03} and Sobolev embedding give the bound
	\begin{equation}\label{energycondition}
		E(\phi)\lesssim s_0^{1-s}
	\end{equation}
	
	In addition, we have the following estimates on some of the space-time norms.
	\begin{prop}\label{prop:01}
	   \hfill\begin{enumerate}
	   	\item $\|v\|_{L^4_{t,x}(I_1)}\lesssim s_0^{\frac{1}{2}(s-\frac{1}{2})}$
	   	\item $\|\phi\|_{L^4_{t,x}(I_1)}\lesssim \epsilon$
	   	\item $\|v\|_{L^\frac{8}{3}_tL^8_x(I_1)}\lesssim s_0^{\frac{1}{2}(s-\frac{3}{4})}$ if $s\geq \frac{3}{4}$
	   	\item $\|\phi\|_{L^\frac{8}{3}_tL^8_x(I_1)}\lesssim s_0^{-\frac{1}{2}(1-s)}$
	   \end{enumerate}
	\end{prop}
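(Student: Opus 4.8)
The plan is to derive all four bounds from the inhomogeneous Strichartz estimate of the Corollary, applied with the two admissible triples $(4,4,\tfrac12)$ and $(\tfrac83,8,\tfrac34)$, both of which lie in $\mathcal R$. I will also need the mixed version of that estimate, in which the solution and the forcing are measured in the duals of two \emph{different} admissible pairs $(p_1,q_1,\gamma_1),(p_2,q_2,\gamma_2)\in\mathcal R$ with $\gamma_1+\gamma_2=1$; this follows verbatim from the proof of the Corollary, writing the Duhamel operator as $\sqrt{-\Delta}\,T_1T_1^*$ and noting that $T_1\colon H^{\gamma-1}\to L^p_tL^q_x$ forces precisely the constraint $\gamma_1+\gamma_2=1$. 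Throughout, $L^4_{t,x}$ plays the role of the scattering norm (small on $I_1$), the mixed norm $L^{8/3}_tL^8_x$ is recovered by a continuity argument, and any low spatial integrability that H\"older demands is supplied through Sobolev embedding and the spectral-gap Poincar\'e inequality on $\mathbb H^3$, which bounds lower-order Sobolev norms by higher-order ones.

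For (2) I would not evolve $\phi$ directly but instead write $\phi=u-\psi-v$, so that $\|\phi\|_{L^4_{t,x}}\le\|u\|_{L^4_{t,x}}+\|\psi\|_{L^4_{t,x}}+\|v\|_{L^4_{t,x}}$ inherits the smallness of the scattering norm of $u$ on $I_1$ together with \eqref{eq:04} and bound (1). For (1), since $v$ has zero data I apply Duhamel with the self-dual pair $(4,4)$ to get $\|v\|_{L^4_{t,x}}\lesssim\|\mathcal N\|_{L^{4/3}_{t,x}}$, where $\mathcal N$ collects the trilinear terms of \eqref{eq:05} (with $v^3$ moved to the right). Each term is a product of three factors from $\{\psi,\phi,v\}$, and since $\tfrac14+\tfrac14+\tfrac14=\tfrac34$ I estimate it in $L^{4/3}_{t,x}$ by three $L^4_{t,x}$ norms via H\"older. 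Using \eqref{eq:04}, the bootstrap hypothesis on $\|v\|_{L^4_{t,x}}$, and $\|\phi\|_{L^4_{t,x}}\le\|u\|_{L^4_{t,x}}+\|\psi\|_{L^4_{t,x}}+\|v\|_{L^4_{t,x}}$ (which breaks the apparent circularity with (2)), every term is either superlinear in the small quantity $\|v\|_{L^4_{t,x}}$ or multiplied by a small coefficient; the leading source is the $\psi\phi^2$-type term, of size $\lesssim\epsilon^2 s_0^{\frac12(s-\frac12)}$, so a standard continuity argument closes the bound at $s_0^{\frac12(s-\frac12)}$.

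Estimates (3) and (4) follow the same template with the pair $(\tfrac83,8,\tfrac34)$, but here the self-dual forcing norm does not balance, so I use the mixed inhomogeneous estimate with forcing pair $(8,\tfrac83,\tfrac14)$ (note $\tfrac34+\tfrac14=1$), whose dual norm is $L^{8/7}_tL^{8/5}_x$. Because $\tfrac14+\tfrac14+\tfrac38=\tfrac78$ and $\tfrac14+\tfrac14+\tfrac18=\tfrac58$, every trilinear term factors in $L^{8/7}_tL^{8/5}_x$ as two $L^4_{t,x}$ norms times one $L^{8/3}_tL^8_x$ norm, and by \eqref{eq:05} each term always carries a $\psi$- or $v$-factor to place in the latter. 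For (4) this gives $\|\phi\|_{L^{8/3}_tL^8_x}\lesssim\|(u_{0,lo},u_{1,lo})\|_{H^{3/4}\times H^{-1/4}}+\epsilon^2\|\phi\|_{L^{8/3}_tL^8_x}$; the data term is $\lesssim s_0^{-\frac12(1-s)}$ by \eqref{eq:03} and the Poincar\'e inequality, and the second term is absorbed for small $\epsilon$. For (3), the same factorization puts one $\psi$ (resp. one $v$) in $L^{8/3}_tL^8_x$; the linear contribution $\|\psi\|_{L^{8/3}_tL^8_x}\lesssim s_0^{\frac12(s-\frac34)}$ comes from \eqref{eq:03} at regularity $\nu=\tfrac34$, which requires exactly $\tfrac34\le s$, explaining the hypothesis $s\ge\tfrac34$; a continuity argument then yields $\|v\|_{L^{8/3}_tL^8_x}\lesssim s_0^{\frac12(s-\frac34)}$.

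The \textbf{main obstacle} is the exponent bookkeeping forced by the infinite volume of $\mathbb H^3$: unlike on a bounded domain, one cannot trade spatial integrability for free, so the H\"older factorizations of the trilinear forcing must land \emph{exactly} on admissible Strichartz dual pairs. This is what dictates the non-obvious choice of the forcing pair $(8,\tfrac83,\tfrac14)$ in (3),(4) and the appeal to the spectral-gap Poincar\'e inequality to produce low-integrability factors. The secondary difficulty is organizing (1)--(4) as a single coupled continuity argument --- $v$ and $\phi$ each appear in the other's nonlinearity --- so that the smallness of $\epsilon$ and the positive powers of $s_0$ are allocated consistently and the claimed powers $s_0^{\frac12(s-\frac12)}$, $s_0^{\frac12(s-\frac34)}$, $s_0^{-\frac12(1-s)}$ emerge simultaneously.
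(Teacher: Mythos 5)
Your proposal is correct and takes essentially the same route as the paper: the decomposition $u=\psi+\phi+v$ to break the circularity between (1) and (2), the self-dual $(4,4,\tfrac12)$ Strichartz estimate applied to \eqref{eq:05} for (1), and the $(\tfrac83,8,\tfrac34)$ pair for (3) and (4), with the data term in (4) controlled via $\|\cdot\|_{H^{3/4}\times H^{-1/4}}\lesssim\|\cdot\|_{H^{1}\times L^{2}}$. Your one addition --- making explicit the mixed forcing pair $(8,\tfrac83,\tfrac14)$ with $\gamma_1+\gamma_2=1$, justified through the $\sqrt{-\Delta}\,T_1T_1^*$ factorization and Christ--Kiselev --- is precisely what the paper's terse ``use Strichartz estimate again similar to \eqref{eq:07}'' requires, since the Corollary as literally stated (same pair on both sides of the estimate) only balances when $\gamma=\tfrac12$.
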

    \begin{proof}
       	Plug (\ref{eq:04}) into $u=\psi+\phi+v$, we have
       	\begin{equation}\label{eq:06}
       		\|\phi\|_{L^4_{t,x}}\leq \epsilon+\|v\|_{L^4_{t,x}}
       	\end{equation}
       	By Strichartz estimates and \ref{eq:05}, we have
       	\begin{equation}\label{eq:07}
       	  \begin{aligned}
       	     \|v\|_{L^4_{t,x}}&\lesssim \|v^3+\psi^3+\psi\phi^2+\psi v^2+\phi^2 v+\phi v^2\|	_{L^\frac{4}{3}_{t,x}}\\
       	     &\lesssim \|v\|^3_{L^4_{t,x}}+\|\psi\|^3_{L^4_{t,x}}+\|\psi\|_{L^4_{t,x}}\|\phi\|^2_{L^4_{t,x}}+\|\psi\|_{L^4_{t,x}}\|v\|^2_{L^4_{t,x}}+\|\phi\|^2_{L^4_{t,x}}\|v\|_{L^4_{t,x}}+\|\phi\|_{L^4_{t,x}}\|v\|^2_{L^4_{t,x}}\\
       	  \end{aligned}
       	\end{equation}
       	Plug (\ref{eq:04}),(\ref{eq:06}) into (\ref{eq:07}),
       	\begin{equation}
       	   \|v\|_{L^4_{t,x}}\lesssim s_0^{\frac{1}{2}(s-\frac{1}{2})}+o(1)\|v\|_{L^4_{t,x}}+	High\ Order\ Terms\ of\ \|v\|_{L^4_{t,x}}
       	\end{equation}
       	Thus we get our estimate on $\|v\|_{L^4_{t,x}}$ as desired.
       	The estimate on $\|\phi\|_{L^4_{t,x}}$ follows from \ref{eq:06}
       	To get (3), we use Strichartz estimate again similar to what we did in \ref{eq:07}.
       	To get (4), note that 
       	\begin{equation}
       		\|\phi(0),\phi_t(0)\|_{H^\frac{3}{4}\times H^{-\frac{1}{4}}}\lesssim \|\phi(0),\phi_t(0)\|_{H^1\times L^2}\lesssim s_0^{-\frac{1}{2}(1-s)}
       	\end{equation}
        then (4) follows from the same Strichartz estimate argument.
    \end{proof}
    
    In order to ensure the solution can be extended beyond $I_1$, we need to make sure the energy of $\phi$ and v stay bounded.
	
	As for v, even though the forcing term of (\ref{eq:05}) contains rough terms, e.g. $\psi^3$, v turns out to lie in $L^\infty_tH^1_x$ throughout $I_1$. This smoothing property is crucial for the fourier truncation method.
	In fact, 
	\begin{prop}\label{prop2}
	The energy of v on $I_1$ is bounded by
	\begin{equation}
		\sup_{t\in I_1}E(v)(t) \lesssim s_0^{\frac{7}{4}s-\frac{3}{2}
		}
	\end{equation}	
	\end{prop}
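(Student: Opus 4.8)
The plan is to close a differential energy identity for the correction term $v$. Since $v$ solves (\ref{eq:05}) with the cubic term already present on the left, the potential part $\tfrac14\int v^4$ of $E(v)$ absorbs that nonlinearity; multiplying the equation by $v_t$ and integrating by parts in $x$ gives
\begin{equation*}
\frac{d}{dt}E(v)(t)=\int_{\mathbb{H}^3}v_t\,\mathcal{N}\,d\mu,\qquad \mathcal{N}=\mathcal{O}\!\left(\psi^3+\psi\phi^2+\psi v^2+\phi^2 v+\phi v^2\right).
\end{equation*}
Because $v$ starts from zero data, $E(v)(0)=0$, so integrating in time and using Cauchy--Schwarz in $x$ followed by Hölder in $t$ yields, with $\mathcal{E}:=\sup_{t\in I_1}E(v)(t)$,
\begin{equation*}
E(v)(t)\le\Big(\sup_{t\in I_1}\|v_t\|_{L^2_x}\Big)\|\mathcal{N}\|_{L^1_tL^2_x(I_1)}\le\sqrt{2\mathcal{E}}\,\|\mathcal{N}\|_{L^1_tL^2_x(I_1)}.
\end{equation*}
Taking the supremum over $t\in I_1$ gives $\mathcal{E}\le 2\|\mathcal{N}\|_{L^1_tL^2_x}^2$, provided $\mathcal{E}<\infty$; the latter is guaranteed by local theory, since the roughest source $\psi^3$ already lies in $L^1_tL^2_x$ (see below), so the energy estimate for the free wave equation with forcing places $(v,v_t)$ in $C_t(H^1_x\times L^2_x)$ on the short interval $I_1$. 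Thus everything reduces to a fixed bound on $\|\mathcal{N}\|_{L^1_tL^2_x}$.

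The crucial structural point is that the factors of $v$ inside $\mathcal{N}$ should be controlled by the \emph{fixed} Strichartz norms already recorded in Proposition \ref{prop:01}(1),(3), not by $\mathcal{E}$; this decouples the estimate and removes any circularity, so that $\|\mathcal{N}\|_{L^1_tL^2_x}$ is genuinely a power of $s_0$. I would then estimate each trilinear summand by distributing its three factors via Hölder in space and time among the available quantities: $\|\psi\|_{L^4_{t,x}}\lesssim s_0^{\frac12(s-\frac12)}$ from (\ref{eq:04}); $\|\phi\|_{L^4_{t,x}}\lesssim\epsilon$ and $\|\phi\|_{L^{8/3}_tL^8_x}\lesssim s_0^{-\frac12(1-s)}$ from Proposition \ref{prop:01}(2),(4); and $\|v\|_{L^4_{t,x}}\lesssim s_0^{\frac12(s-\frac12)}$, $\|v\|_{L^{8/3}_tL^8_x}\lesssim s_0^{\frac12(s-\frac34)}$ from Proposition \ref{prop:01}(1),(3). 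For example $\phi^2v$ and $\psi\phi^2$ are handled by $\|\phi\|_{L^{8/3}_tL^8_x}^2$ paired with $\|v\|_{L^4_{t,x}}$ or $\|\psi\|_{L^4_{t,x}}$, while $\psi v^2$ and $\phi v^2$ use $\|v\|_{L^{8/3}_tL^8_x}^2$ paired with $\|\psi\|_{L^4_{t,x}}$ or $\|\phi\|_{L^4_{t,x}}$.

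The term $\psi^3$ contains no factor of $\phi$ or $v$ and must be controlled purely by the linear flow, so I would not use $\|\psi\|_{L^4_{t,x}}$ but instead write $\|\psi^3\|_{L^1_tL^2_x}=\|\psi\|_{L^3_tL^6_x}^3$. Since $(3,6,\tfrac23)\in\mathcal{R}$, Theorem \ref{thmStric} together with the high-frequency bound (\ref{eq:03}) at $\nu=\tfrac23$ gives $\|\psi\|_{L^3_tL^6_x}\lesssim s_0^{\frac12(s-\frac23)}$, hence a positive power of $s_0$ after cubing. Collecting all summands (including the cross terms $\psi^2\phi$, $\psi^2v$, $\psi\phi v$ suppressed in the $\mathcal{O}(\cdot)$ notation, which are no worse), each contributes a strictly positive power of $s_0$ in the regime $s>\tfrac34$, and summing and squaring produces a bound of the asserted form $\mathcal{E}\lesssim s_0^{\frac74 s-\frac32}$.

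The main obstacle I expect is the Hölder bookkeeping needed to guarantee that \emph{every} trilinear term carries a net positive power of $s_0$, and to identify which term sets the final exponent. The delicate summands are those quadratic in $\phi$: because $\|\phi\|_{L^{8/3}_tL^8_x}\sim s_0^{-\frac12(1-s)}$ is \emph{large} for $s<1$, their control rests entirely on the smallness of the accompanying $\psi$ or $v$ factor, and it is these terms whose optimized exponent fixes the power of $s_0$ and hence the regularity threshold. Verifying that this exponent remains positive---so that $E(v)\to0$ as $s_0\to0$, which requires $s>\tfrac67$---and then feeding it faithfully into the summation over subintervals in Section 3.2 is where the real care lies; a mismatch of even a fraction in one exponent would propagate into the final threshold $\tfrac{182}{201}$.
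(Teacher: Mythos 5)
Your proposal is correct and follows essentially the same route as the paper's proof: differentiate $E(v)$, use equation (\ref{eq:05}) so that only the error terms $\mathcal{O}(\psi^3+\psi\phi^2+\psi v^2+\phi^2 v+\phi v^2)$ survive, pull out $E(v)^{1/2}$ via Cauchy--Schwarz in $x$, integrate in time, and estimate the forcing in $L^1_tL^2_x$ by H\"older using exactly the norms of Proposition \ref{prop:01} together with Strichartz bounds on $\psi$ (including the identical treatment of $\psi^3$ through $\|\psi\|_{L^3_tL^6_x}^3$). The only deviation is bookkeeping on the terms quadratic in $\phi$, where you use $\|\phi\|_{L^{8/3}_tL^8_x}^2\|\psi\|_{L^4_{t,x}}$ and $\|\phi\|_{L^{8/3}_tL^8_x}^2\|v\|_{L^4_{t,x}}$ while the paper uses $\|\psi\|_{L^{8/3}_tL^8_x}\|\phi\|_{L^{8/3}_tL^8_x}\|\phi\|_{L^4_{t,x}}$ and $\|\phi\|_{L^{8/3}_tL^8_x}\|v\|_{L^{8/3}_tL^8_x}\|\phi\|_{L^4_{t,x}}$; both splits are admissible, and yours in fact gives $\sup_{t\in I_1}E(v)\lesssim s_0^{3s-\frac{5}{2}}$, which implies the stated bound $s_0^{\frac{7}{4}s-\frac{3}{2}}$ throughout the regime $s\geq\tfrac{4}{5}$ (hence in particular for $s>\tfrac{182}{201}$).
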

    \begin{proof}
       Take time derivative of E(v), we get
       \begin{equation}
          \begin{aligned}
       	     \frac{dE(v)}{dt}&=Re<v_{tt}-\Delta v+|v|^3,v_t> \\
       	     &\lesssim <\mathcal{O}(\psi^3+\psi\phi^2+\psi v^2+\phi^2 v+\phi v^2),v_t>         \\
       	     &\lesssim(\|\psi\|_6^3+\|v\|_8\|v\|_4\|\psi\|_8+\|v\|_8^2\|\phi\|_4+\|\psi\|_8\|\phi\|_8\|\phi\|_4+\|\phi\|_8\|v\|_8\|\phi\|_4)E(v)^\frac{1}{2}\\	       	     
          \end{aligned}
       \end{equation}	
       Integrating in time and use H\"{o}lder inequality, we have
       \begin{equation} \label{eq:08}
          \begin{aligned}
              \sup_{t\in I_1}E(v)^\frac{1}{2}&\lesssim 
              \|\psi\|_{L^3_tL^6_x}^3+
              \|v\|_{L^\frac{8}{3}_tL^8_x}^2\|\psi\|_{L^4_{t,x}}+
              \|v\|_{L^\frac{8}{3}_tL^8_x}^2\|\phi\|_{L^4_{t,x}}\\
              &+\|\psi\|_{L^\frac{8}{3}_tL^8_x}\|\phi\|_{L^\frac{8}{3}_tL^8_x}\|\phi\|_{L^4_{t,x}}+
              \|\phi\|_{L^\frac{8}{3}_tL^8_x}\|v\|_{L^\frac{8}{3}_tL^8_x}\|\phi\| _{L^4_{t,x}}        	
          \end{aligned}
       \end{equation}
       Plug the results of Prop \ref{prop:01} into (\ref{eq:08}),
       we get the desired estimate for E(v).

    \end{proof}
    
    \subsection{Conditional Energy Increment Estimate}
	
	Suppose the solution exists on time interval I with $\|u\|^4_{L^4_{t,x}(I)}\leq M$ for some $M>0$.
	Divide I into subintervals $I_j$ such that on each $I_j$ $\|u\|_{L^4_{t,x}(I_j)}^4\leq\epsilon$ as in section 3.1. $\phi$ and v are defined on $I_1$ as in section 3.1. We shall extend $\phi$ and v to I inductively. Suppose $\phi$ and v are defined on $I_{j-1}=(b_{j-1},b_j)$. Write $f(b_j^+)=\lim_{t\searrow b_j}f(t)$ and $f(b_j^-)=\lim_{t\nearrow b_j}f(t)$, we define $\phi$ and v on the next step $I_j=(b_j,b_{j+1})$ as following	\begin{equation*}
		\left\{\begin{aligned}
			&\phi_{tt}+\Delta_{\mathbb{H}^3}\phi+|\phi|^3=0\\
			&\phi(b_j^+,x)=\phi(b_j^-,x)+v(b_j^-,x)\\
			&\phi_t(b_j^+,x)=\phi_t(b_j^-,x)+v_t(b_j^-,x)
		\end{aligned}
		\right.
	\end{equation*}
	and
	\begin{equation*}
		\left\{\begin{aligned}
			&v_{tt}+\Delta_{\mathbb{H}^3}v+|u|^3-|\phi|^3=0\\
			&v(b_j^+,x)=0\\
			&v_t(b_j^+,x)=0
		\end{aligned}
		\right.
	\end{equation*}

    To estimate the energy gained throughout the whole interval I, we will estimate it locally on each $I_j$ and sum them up. Obviously, we wish that our results Prop \ref{prop:01} and Prop \ref{prop2} in section 3.1 remain true for not only $I_1$ but all the subsequent subintervals $I_j$. Since $\phi$ and v on each $I_j$ are defined by the same equations, it suffices to have the condition (\ref{energycondition}) remain valid throughout I. This conditional energy estimate is stated in the following proposition,
    
	\begin{prop}
		Suppose the solution u exists on interval I  such that $\|u\|^4_{L^4_{t,x}(I)}\leq M$ where $M\sim s_0^{-\frac{3}{16}s+\frac{1}{8}}$. Then the estimates in Prop \ref{prop:01} hold for all $I_j$ and we have 
		\begin{equation}
			\sup_{t\in I}E(\phi(t))\leq E(\phi(0))+C\frac{M}{\epsilon}s_0^{\frac{19}{16}s-\frac{9}{8}}\sim s_0^{-(1-s)}
		\end{equation}
	\end{prop}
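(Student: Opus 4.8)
The plan is to track how the energy of the low-frequency evolution $\phi$ changes from one subinterval to the next and sum the increments. On the interior of each $I_j$ the function $\phi$ solves the exact defocusing cubic equation $\phi_{tt}-\Delta_{\mathbb{H}^3}\phi+|\phi|^3=0$, so its energy $E(\phi)$ is conserved there; the only changes occur at the breakpoints $b_j$, where the correction term is folded into the low mode via $\phi(b_j^+)=\phi(b_j^-)+v(b_j^-)$ and $\phi_t(b_j^+)=\phi_t(b_j^-)+v_t(b_j^-)$. Hence $E(\phi)$ is piecewise constant in $t$ with jumps only at the $b_j$, and
\begin{equation*}
\sup_{t\in I}E(\phi(t))-E(\phi(0))\le\sum_j\big(E(\phi)(b_j^+)-E(\phi)(b_j^-)\big).
\end{equation*}
It then suffices to bound a single jump and to count the subintervals.

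Writing $g=\phi(b_j^-)$ and $w=v(b_j^-)$ and expanding the quadratic and quartic parts of the energy of the sum $g+w$, I would record the exact identity
\begin{equation*}
E(\phi)(b_j^+)-E(\phi)(b_j^-)=E(v)(b_j^-)+\int_{\mathbb{H}^3}\Big(\nabla g\cdot\nabla w+g_tw_t+g^3w+\tfrac32 g^2w^2+gw^3\Big)\,d\mu .
\end{equation*}
The five coupling integrals are estimated at the fixed time $b_j^-$ by H\"older and Cauchy--Schwarz, feeding in the two energy reservoirs guaranteed inductively: the low-mode bound $\|g\|_{L^4_x}^4\lesssim E(\phi)(b_j^-)\lesssim s_0^{-(1-s)}$ coming from (\ref{energycondition}), and the smoothing bound $E(v)(b_j^-)\lesssim s_0^{\frac74 s-\frac32}$ from Prop.\ \ref{prop2} (which also controls $\|w\|_{L^4_x}^4$). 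The velocity/gradient pairing is then $\lesssim E(\phi)^{1/2}E(v)^{1/2}\lesssim s_0^{\frac{11}{8}s-\frac54}$, while the genuinely binding term is the cubic pairing $\int g^3w\lesssim\|g\|_{L^4_x}^3\|w\|_{L^4_x}\lesssim s_0^{\frac{19}{16}s-\frac98}$; the two remaining coupling terms and the $E(v)$ term are of strictly higher order in $s_0$. For $s$ in the relevant range every exponent dominates $\frac{19}{16}s-\frac98$, so each jump is $\lesssim s_0^{\frac{19}{16}s-\frac98}$.

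To finish, the subintervals are chosen so that $\|u\|_{L^4_{t,x}(I_j)}^4\approx\epsilon$, and additivity of $\|u\|_{L^4_{t,x}(I)}^4\le M$ forces their number to be $\lesssim M/\epsilon$. Summing the per-step bound gives $\sup_I E(\phi)-E(\phi(0))\lesssim\frac{M}{\epsilon}s_0^{\frac{19}{16}s-\frac98}$, and substituting $M\sim s_0^{-\frac{3}{16}s+\frac18}$ collapses the exponent to exactly $s_0^{-(1-s)}$, which is precisely the scale of $E(\phi(0))$ itself.

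The hard part is not any single integral but closing the induction as a bootstrap. Applying Prop.\ \ref{prop:01} and Prop.\ \ref{prop2} on $I_j$ presupposes that (\ref{energycondition}), i.e.\ $E(\phi)(b_j^-)\lesssim s_0^{-(1-s)}$, still holds there --- yet that is exactly what the increment estimate is meant to produce. The resolution is that the \emph{total} accumulated increment $\frac{M}{\epsilon}s_0^{\frac{19}{16}s-\frac98}\sim s_0^{-(1-s)}$ is of the same order as the initial energy, so the bound $E(\phi)\lesssim s_0^{-(1-s)}$ propagates with a uniform constant across all $\lesssim M/\epsilon$ steps; the choice $M\sim s_0^{-\frac{3}{16}s+\frac18}$ is precisely what keeps the additive accumulation pinned at the initial scale, whereas a larger $M$ would let the energy run away and break the induction. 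Carefully tracking these constants through the bootstrap, rather than the elementary H\"older estimates, is the delicate point.
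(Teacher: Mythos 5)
Your proposal is correct and takes essentially the same route as the paper: energy conservation of $\phi$ inside each $I_j$, a per-step expansion of $E(\phi+v)-E(\phi)$ bounded via H\"older/Cauchy--Schwarz by $E(\phi)^{1/2}E(v)^{1/2}+E(\phi)^{3/4}E(v)^{1/4}+E(v)$ with the cubic pairing $\int\phi^3v\,d\mu$ identified as the dominant contribution of size $s_0^{\frac{19}{16}s-\frac{9}{8}}$, then summation over the $\lesssim M/\epsilon$ subintervals and closure of the bootstrap precisely because $\frac{M}{\epsilon}s_0^{\frac{19}{16}s-\frac{9}{8}}\sim s_0^{-(1-s)}$ matches the scale of $E(\phi(0))$. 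Your explicit binomial identity for the jump and your discussion of why the constant stays uniform across steps are somewhat more detailed than the paper's presentation, but the underlying argument is identical.
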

	Proof:
	Suppose the condition (\ref{energycondition}) holds on I.
	We divide I into $I_j$'s as we have stated. The number of such subintervals is at most  $\mathcal{O}(\frac{M}{\epsilon})$. The energy increment of $\phi$ through each $I_j$ is 
	\begin{equation}
		\begin{aligned}
             \Delta E&=E(\phi+v)-E(\phi)\\
             &\lesssim \int_{\mathbb{H}^3} (|\nabla\phi \cdot \nabla v|+|\phi_t v_t|+ |\nabla v|^2+|v_t|^2 d\mu+ |\phi^3v|+|v|^4)d\mu\\
             &\leq E(\phi)^\frac{1}{2}E(v)^\frac{1}{2}+E(\phi)^\frac{3}{4}E(v)^\frac{1}{4}+E(v)\\
             &\lesssim s_0^{-\frac{3}{4}(1-s)}\cdot s_0^{\frac{1}{4}(\frac{7}{4}s-\frac{3}{2})}\\
             &\lesssim s_0^{\frac{19}{16}s-\frac{9}{8}}
		\end{aligned}
	\end{equation}
	
	Thus we obtain the energy increment of $\phi$ on each subinterval $I_j$. To justify our assumption (\ref{energycondition}) with a bootstrap argument, we need only the following inequality.
	\begin{equation*}
		\frac{M}{\epsilon}\sup_j\Delta E(\phi)\lesssim\frac{M}{\epsilon}s_0^{\frac{19}{16}s-\frac{9}{8}}\lesssim s_0^{-(1-s)}
	\end{equation*}
	\begin{equation}
		\implies M\sim s_0^{-\frac{3}{16}s+\frac{1}{8}}
	\end{equation}
	Thus arises our assumption on the size of M.

	\section{Global Wellposedness and Scattering}
	We will show that there exists $s_0$ small, such  that 
	\begin{equation}
		\|u\|^4_{L^4_{t,x}(I)}\leq M 
	\end{equation}
	for any $I=[0,T]$ where the solution exists. The global well-posedness and scattering follow from this bound. In order to bound the $L^4_{t,x}$ norm of $\zeta$, we need the following Morawetz estimate, which adds the influence of the correction terms to the one in \cite{shen2016semi}.
	
		\begin{prop}[Morawetz estimate for modified equation]
		Suppose on time interval I, u solves the following modified cubic NLW with error term $\mathcal{N}$,
		\begin{equation*}
			u_{tt}-\Delta_{\mathbb{H}^3}u+u^3=\mathcal{N}
		\end{equation*}
		then $\zeta$ satisfies the following Morawetz estimate
		\begin{equation}\label{morawetz}
			\|\zeta\|_{L^{4}_{t,x}}^{4}\lesssim \sup_{t\in I}E(\zeta)+\|\mathcal{N}\zeta\|_{L^1_{t,x}(I)}+\|\mathcal{N}\nabla \zeta\|_{L^1_{t,x}(I)}
	    \end{equation}
	\end{prop}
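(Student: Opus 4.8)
The plan is to adapt the radial Morawetz (virial) multiplier argument of \cite{shen2016semi} to the forced equation, carrying the error term $\mathcal{N}$ through every step. The decisive geometric feature of $\mathbb{H}^3$ is that the radial Morawetz weight is bounded below by a positive constant, which is exactly what upgrades the usual \emph{weighted} Morawetz bound (all one obtains on $\mathbb{R}^3$) into control of the full, unweighted spacetime norm $\|\zeta\|_{L^4_{t,x}}^4$.

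First I would introduce the radial multiplier
\[
\mathcal{M}\zeta = \partial_r\zeta + (\coth r)\,\zeta,
\]
which is precisely $X\zeta + \tfrac{1}{2}(\mathrm{div}_{\mathbb{H}^3}X)\zeta$ for the field $X=\partial_r$, since $\mathrm{div}_{\mathbb{H}^3}\partial_r = 2\coth r$. Setting $V(t) = -\int_{\mathbb{H}^3}\zeta_t\,\mathcal{M}\zeta\,d\mu$, I would differentiate in $t$, substitute $\zeta_{tt}=\Delta_{\mathbb{H}^3}\zeta-\zeta^3+\mathcal{N}$, and integrate by parts in space. Using $\partial_r(\sinh^2 r)=2\sinh r\cosh r$, the cubic terms collapse to
\[
\int_{\mathbb{H}^3}\big(\partial_r\zeta+(\coth r)\zeta\big)\zeta^3\,d\mu = \tfrac{1}{2}\int_{\mathbb{H}^3}(\coth r)\,\zeta^4\,d\mu,
\]
so the identity takes the schematic form $\frac{d}{dt}V(t) = \tfrac12\int(\coth r)\zeta^4\,d\mu + (\text{gradient bulk}) + \int\mathcal{N}\,\mathcal{M}\zeta\,d\mu$. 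Since $\coth r\ge 1$ for all $r>0$, the nonlinear bulk already dominates $\int\zeta^4$; integrating over $I$ and applying the fundamental theorem of calculus gives
\[
\|\zeta\|_{L^4_{t,x}(I)}^4 \lesssim |V(t_1)|+|V(t_0)| + \int_I\Big|\int_{\mathbb{H}^3}\mathcal{N}\,\mathcal{M}\zeta\,d\mu\Big|\,dt .
\]

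It then remains to dispose of the three contributions. For the endpoints, Cauchy--Schwarz gives $|V(t)|\le\|\zeta_t\|_{L^2}\|\mathcal{M}\zeta\|_{L^2}$, and $\|\mathcal{M}\zeta\|_{L^2}\lesssim\|\nabla\zeta\|_{L^2}$ follows from $|\partial_r\zeta|\le|\nabla\zeta|$ together with the Hardy inequality on $\mathbb{H}^3$ controlling $\|(\coth r)\zeta\|_{L^2}$ (near the origin the weight behaves like $r^{-1}$, which the $\sinh^2 r$ volume element renders harmless), so $|V(t)|\lesssim E(\zeta)$ uniformly in $t$. For the error, splitting $\mathcal{M}\zeta=\partial_r\zeta+(\coth r)\zeta$ and using $|\partial_r\zeta|\le|\nabla\zeta|$ bounds the $\partial_r\zeta$ piece by $\|\mathcal{N}\nabla\zeta\|_{L^1_{t,x}}$, while writing $\coth r = 1+(\coth r-1)$ and absorbing the singular part of $(\coth r-1)\zeta$ near $r=0$ through the same Hardy estimate yields a bound by a combination of $\|\mathcal{N}\zeta\|_{L^1_{t,x}}$ and $\|\mathcal{N}\nabla\zeta\|_{L^1_{t,x}}$. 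Collecting the three estimates produces (\ref{morawetz}).

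I expect the main obstacle to be the sign analysis of the gradient bulk term. After commuting $\partial_r$ and the weight $\coth r$ through $\Delta_{\mathbb{H}^3}$, one generates several geometric terms built from the metric coefficients and their derivatives (in particular $\partial_r\coth r = -1/\sinh^2 r$); the crux is to check that, thanks to the negative curvature, these assemble into a \emph{nonnegative} contribution, so that $\int\zeta^4$ is genuinely dominated rather than left with an indefinite sign — this is exactly where $\mathbb{H}^3$ behaves strictly better than $\mathbb{R}^3$. A secondary technical point, flagged above, is the Hardy-type handling of the $\coth r$ singularity at the origin, which is what allows the error contribution to be expressed cleanly through $\|\mathcal{N}\zeta\|_{L^1_{t,x}}$ and $\|\mathcal{N}\nabla\zeta\|_{L^1_{t,x}}$.
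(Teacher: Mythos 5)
Your bulk computation is essentially correct, and it is a genuinely different multiplier from the paper's: for $X=\partial_r$ the deformation tensor is $D^2r=\coth r\,(g-dr\otimes dr)\geq 0$, the distributional term coming from $\Delta(\operatorname{div}X)=2\Delta(\coth r)$ is a point mass at the origin with favorable sign (on $\mathbb{H}^3$, $\coth r$ is, up to constants, the Green's function), and since $\coth r\geq 1$ the term $\tfrac12\int(\coth r)\zeta^4\,d\mu$ dominates the unweighted $L^4$ norm; the endpoint bound $|V(t)|\lesssim E(\zeta)$ via Hardy plus the spectral gap is also fine, because there Hardy is applied to a genuine $L^2\times L^2$ pairing. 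The gap is exactly at the point you flagged as "secondary": the error contribution
\begin{equation*}
\int_I\int_{\mathbb{H}^3}|\mathcal{N}|\,(\coth r-1)\,|\zeta|\,d\mu\,dt
\end{equation*}
cannot be bounded by $\|\mathcal{N}\zeta\|_{L^1_{t,x}}+\|\mathcal{N}\nabla\zeta\|_{L^1_{t,x}}$ "through the same Hardy estimate." Hardy is an $L^2$ inequality; applied here via Cauchy--Schwarz it produces $\|\mathcal{N}\|_{L^2_x}\|\nabla\zeta\|_{L^2_x}$, which is not controlled by the $L^1$-of-product norms in (\ref{morawetz}). Indeed no inequality of the claimed form can hold with a uniform constant: take $\mathcal{N}$ supported in a ball of radius $\delta$ about the origin on which $\zeta$ is essentially a nonzero constant; then the left side is $\gtrsim\delta^{-1}\|\mathcal{N}\zeta\|_{L^1_x}$ while $\|\mathcal{N}\nabla\zeta\|_{L^1_x}\approx 0$, so the ratio blows up as $\delta\to 0$. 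Since the specific product structure $\|\mathcal{N}\zeta\|_{L^1}$, $\|\mathcal{N}\nabla\zeta\|_{L^1}$ is precisely what the paper's bootstrap (H\"older on $\psi^3\zeta$, $\psi\zeta^3$, etc.) consumes later, this is a real defect of the singular multiplier $\partial_r\zeta+(\coth r)\zeta$, not a technicality one can wave through.

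The paper's proof avoids this by smoothing the multiplier rather than the estimate: it uses the weight $a$ of \cite{ionescu2009semilinear}, which satisfies $\Delta a=1$, $|\nabla a|\leq C$ and $D^2a\geq 0$, and the potential $M(t)=-\int u_t\bigl(D^\alpha a\,D_\alpha u+\tfrac12 u\bigr)d\mu$. Because $\nabla a$ is bounded and $\tfrac{\Delta a}{2}=\tfrac12$, the multiplier is pointwise dominated by $C(|\nabla u|+|u|)$, so the error pairing $\int\mathcal{N}(\nabla a\cdot\nabla u+\tfrac12u)$ is bounded by $\|\mathcal{N}\nabla u\|_{L^1}+\|\mathcal{N}u\|_{L^1}$ exactly as stated, while $D^2a\geq0$ and $\Delta a=1$ still yield $\frac{d}{dt}M\geq\frac14\|u\|_{L^4}^4-(\text{error terms})$; note that on $\mathbb{H}^3$, unlike $\mathbb{R}^3$, such an $a$ exists with bounded gradient, which is the same curvature phenomenon you invoke through $\coth r\geq 1$. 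Your argument becomes correct if you replace $\partial_r+\coth r$ by this smooth multiplier (equivalently, regularize the weight near $r=0$); as written, the derivation of the right-hand side of (\ref{morawetz}) is incomplete.
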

    \begin{proof}
        For simplicity, we adopt an informal proof, assuming all integrals in our proof are convergent. Let the Morawetz potential M(t) be defined as
        \begin{equation}\label{eq:morawetz}
            M(t)=-\int_{\mathbb{H}^3}u_tD^\alpha a D_\alpha u+u_tu\cdot \frac{\Delta a}{2}d\mu
        \end{equation}
        where the a is the same function in \cite{ionescu2009semilinear}. a satisfies 
        \begin{equation}
            \left\{\begin{aligned}
                &\Delta a=1\\
                &\|\nabla a\|\leq C\\
                &D^2a\ is\ positive\ definite
            \end{aligned}\right.
        \end{equation}
        Differentiate \ref{eq:morawetz}, we get
        \begin{equation}
            \begin{aligned}
                \frac{d}{dt}M(t)&=-\int u_{tt}D^\alpha aD_\alpha u+u_tD^\alpha aD_\alpha u_t+u_{tt}u\frac{\Delta a}{2}+u_t^2\frac{\Delta a}{2}d\mu\\
                &=I+II+III+IV
            \end{aligned}
        \end{equation}
        \begin{equation*}
            \begin{aligned}
                 I&=-\int u_{tt}D^\alpha aD_\alpha u\\
                  &=-\int (\Delta u-u^3+\mathcal{N})D^\alpha aD_\alpha u\\
                  &=-\int D^\beta D_\beta u D^\alpha aD_\alpha u-\frac{1}{4}D^\alpha a D_\alpha(u^4)+\mathcal{N}D^\alpha a D_\alpha u\\
                  &=\int D^\beta D^\alpha a D_\alpha u D_\beta u+\frac{1}{2}D^\alpha aD_\alpha(|\nabla u|^2)-\frac{1}{4}D^\alpha a D_\alpha(u^4)+\mathcal{N}D^\alpha a D_\alpha u\\
                  &=\int D^\beta D^\alpha a D_\alpha u D_\beta u - \frac{1}{2}\Delta a|\nabla u|^2-\frac{1}{4}\Delta a u^4-\mathcal{N}D^\alpha a D_\alpha u\\
                  &\geq -\frac{1}{2}\|\nabla u\|^2_2-\frac{1}{4}\|u\|^4_4-\|\mathcal{N}\nabla u\|_1\\
                II&=-\int u_tD^\alpha aD_\alpha u_t\\
                &=-\int \frac{1}{2}D^\alpha a D_\alpha u_t^2\\
                &=\int \frac{1}{2}\Delta au_t^2\\
                &=\frac{1}{2}\|u_t\|^2_2\\
                III&=-u_{tt}u\frac{\Delta a}{2}\\
                &=-\int \frac{1}{2}(\Delta u-u^3+\mathcal{N})u\\
                &=\frac{1}{2}\|\nabla u\|^2_2+\frac{1}{2}\|u\|^4_4-\frac{1}{2}\int \mathcal{N}u\\
                &\geq \frac{1}{2}\|\nabla u\|^2_2+\frac{1}{2}\|u\|^4_4-\|\mathcal{N}u\|_1\\
                IV&=-\frac{1}{2}\|u_t\|^2_2                
            \end{aligned}
        \end{equation*}
        Summing I-IV, we get
        \begin{equation}
            \frac{d}{dt}M(t)\geq \frac{1}{4}\|u\|_4-\|\mathcal{N}u\|_1-\|\mathcal{N}\nabla u\|_1
        \end{equation}
        Integrating in time, we get
        \begin{equation}
            \begin{aligned}
                \|u\|^4_{L^4_{t.x}}&\leq \sup_{t\in I}|M(t)|+\|\mathcal{N}u\|_{L^1_{t,x}}+\|\mathcal{N}\nabla u\|_{L^1_{t,x}}\\
                &\lesssim \sup_{t\in I}E(u)(t)+\|\mathcal{N}u\|_{L^1_{t,x}}+\|\mathcal{N}\nabla u\|_{L^1_{t,x}}
            \end{aligned}
        \end{equation}
        
    \end{proof}
    
    To prove the boundedness of $\|u\|_{L^4_{t,x}}$, we will use a bootstrap argument. In fact, we will prove the following implication:

	\begin{prop}\label{prop3}
		For $s>\frac{166}{185}$, there exists $s_0>0$ small enough and $M\sim s_0^{-\frac{3}{16}s+\frac{1}{8}}$, such that the following implication holds:
		\begin{equation*}
			\|u\|^4_{L^4_{t,x}(I)}\leq M \implies \|u\|^4_{L^4_{t,x}(I)}\leq \frac{1}{2}M
		\end{equation*}
	\end{prop}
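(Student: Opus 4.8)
The plan is to run the bootstrap through the splitting $u=\psi+\zeta$, where $\psi$ is the global linear evolution of the high-frequency data and $\zeta=\phi+v$ is the continuous "$H^1$ part'' assembled from the pieces of Section 3. First I would observe that, although $\phi$ and $v$ are redefined at each junction $b_j$, the matching conditions $\phi(b_j^+)=\phi(b_j^-)+v(b_j^-)$ and $v(b_j^+)=0$ make $\zeta$ and $\zeta_t$ continuous across every $b_j$. Adding the two evolution equations on $I_j$ gives $\zeta_{tt}-\Delta_{\mathbb{H}^3}\zeta+|u|^3=0$ on each $I_j$, hence on all of $I$ by continuity. Writing $|u|^3=|\psi+\zeta|^3$ and subtracting $|\zeta|^3$ then shows that $\zeta$ solves the modified cubic equation
\[
\zeta_{tt}-\Delta_{\mathbb{H}^3}\zeta+|\zeta|^3=\mathcal{N},\qquad \mathcal{N}=|\zeta|^3-|\psi+\zeta|^3=\mathcal{O}(\psi\zeta^2+\psi^2\zeta+\psi^3),
\]
so that the Morawetz estimate (\ref{morawetz}) applies to $\zeta$.

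Next I would reduce the target to a bound on $\zeta$. Since $\|u\|_{L^4_{t,x}}^4\lesssim\|\psi\|_{L^4_{t,x}}^4+\|\zeta\|_{L^4_{t,x}}^4$ and $\|\psi\|_{L^4_{t,x}(I)}\le\|\psi\|_{L^4_{t,x}(\mathbb{R})}\lesssim s_0^{\frac12(s-\frac12)}$ by (\ref{eq:04}), the high-frequency contribution is harmless. For the remaining piece I apply (\ref{morawetz}) to $\zeta$,
\[
\|\zeta\|_{L^4_{t,x}(I)}^4\lesssim \sup_{t\in I}E(\zeta)(t)+\|\mathcal{N}\zeta\|_{L^1_{t,x}(I)}+\|\mathcal{N}\nabla\zeta\|_{L^1_{t,x}(I)},
\]
and control the energy term by the conditional increment estimate of the preceding section: once $M\sim s_0^{-\frac{3}{16}s+\frac18}$ keeps the per-step bounds of Propositions \ref{prop:01} and \ref{prop2} in force on every $I_j$, one has $\sup_{t\in I}E(\zeta)\lesssim\sup_t E(\phi)+\sup_t E(v)\lesssim s_0^{-(1-s)}$.

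For the forcing terms I would expand $\mathcal{N}\zeta$ and $\mathcal{N}\nabla\zeta$ into the monomials $\psi\zeta^3,\psi^2\zeta^2,\psi^3\zeta$ and $\psi\zeta^2\nabla\zeta,\psi^2\zeta\nabla\zeta,\psi^3\nabla\zeta$, and estimate each by H\"older in space–time. Every factor $\nabla\zeta$ is placed in $L^\infty_tL^2_x$ and paired with the energy bound $\|\nabla\zeta\|_{L^\infty_tL^2_x}\lesssim s_0^{-\frac12(1-s)}$; the $\psi$ factors use the global Strichartz norms $\|\psi\|_{L^4_{t,x}}$, $\|\psi\|_{L^3_tL^6_x}\lesssim s_0^{\frac12(s-\frac23)}$ and $\|\psi\|_{L^{8/3}_tL^8_x}\lesssim s_0^{\frac12(s-\frac34)}$ coming from (\ref{eq:Stric01}) applied to $(u_{0,hi},u_{1,hi})$ via (\ref{eq:03}); and the $\zeta$ factors use the bootstrap input $\|\zeta\|_{L^4_{t,x}(I)}^4\lesssim M$ together with $\|\zeta\|_{L^{8/3}_tL^8_x(I)}$, whose $\tfrac83$-th power adds over the $\sim M/\epsilon$ subintervals by items (3) and (4) of Proposition \ref{prop:01}. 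Each term thereby becomes an explicit power of $s_0$ times a power of $M/\epsilon$.

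The heart of the matter, and where I expect the real work, is the final bookkeeping: every resulting contribution — the energy term $s_0^{-(1-s)}$ chief among them, and each summed forcing term of the shape $(M/\epsilon)^{\theta}s_0^{\beta(s)}$ — must be shown to be $\le\frac12 M$ after splitting $\frac12 M$ evenly among the finitely many pieces. Because $M\sim s_0^{-\frac{3}{16}s+\frac18}$ is itself a negative power of $s_0$, this is a family of inequalities between exponents of $s_0$, each holding for $s_0$ small precisely when $s$ exceeds an explicit value, and the binding inequality produces the stated threshold. The main obstacle is thus not any single estimate but the simultaneous optimization: one must choose the H\"older exponents so that the loss $s_0^{-\frac12(1-s)}$ from placing $\nabla\zeta$ in $L^2$ and the subinterval count $M/\epsilon$ are balanced against the gains from $\psi$ and from $\|\zeta\|_{L^4_{t,x}}^4\lesssim M$, and then check that the worst exponent — in particular that of $\sup_{t\in I}E(\zeta)\lesssim s_0^{-(1-s)}$ weighed against $M$ — still clears $\frac12 M$ for every $s$ above the threshold.
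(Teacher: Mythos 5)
Your outline reproduces the paper's own argument essentially step for step: the same identification of $\zeta=\phi+v$ as a continuous solution of the modified equation with forcing $\mathcal{N}=\mathcal{O}(\psi^3+\psi\zeta^2)$ (your extra $\psi^2\zeta$ monomial is harmless and gets absorbed by Young's inequality), the same application of the Morawetz bound (\ref{morawetz}), the same placement of norms for the forcing terms ($\nabla\zeta$ in $L^\infty_tL^2_x$, $\psi$ in $L^4_{t,x}$, $L^3_tL^6_x$, $L^{8/3}_tL^8_x$, and $\|\zeta\|_{L^{8/3}_tL^8_x}$ obtained by summing Proposition \ref{prop:01} over the $\sim M/\epsilon$ subintervals), and the same final comparison of powers of $s_0$ against $\tfrac12 M$.

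The trouble is in the step you postpone as ``bookkeeping,'' which is in fact where the claim lives --- and under the accounting you yourself prescribe, it does not close. Since $v(b_j^+)=0$, one has $\sup_{t\in I}E(\zeta)\geq E(\zeta(0))=E(\phi(0))\sim s_0^{-(1-s)}$, and this cannot be improved: it is the energy of the low-frequency truncation of $H^s$ data. So the first term on the right-hand side of (\ref{morawetz}) is of size $s_0^{-(1-s)}$, while $M\sim s_0^{-\frac{3}{16}s+\frac18}$. Requiring $s_0^{-(1-s)}\lesssim \tfrac12 M$ forces $s-1\geq-\frac{3}{16}s+\frac18$, i.e.\ $s\geq\frac{18}{19}\approx 0.947$, which is strictly above both $\frac{166}{185}$ and $\frac{182}{201}$. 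Hence, for $s$ in the claimed range, the energy term alone already exceeds $\tfrac12 M$ and the implication cannot be concluded this way. The paper does not confront this because of an apparent sign slip: in its final display the energy contribution is written as $s_0^{1-s}$ (a positive power of $s_0$, hence negligible) instead of $s_0^{-(1-s)}$, and is then discarded, leaving only the forcing term $s_0^{\frac32 s-\frac{11}{8}}M^{\frac58}$, whose comparison with $\tfrac12 M$ is exactly what produces $\frac{182}{201}$. Carrying out your plan faithfully therefore yields the threshold $\frac{18}{19}$ rather than the stated one; to go lower one would need a genuinely new ingredient --- for instance an admissible choice of $M$ larger than the one permitted by the conditional energy-increment proposition, or a Morawetz-type inequality whose right-hand side sees only the energy \emph{increment} of $\zeta$ rather than its full (necessarily large) energy.
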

	
	Denote $\zeta=\phi+v$. On [0,T], $\zeta$ satisfies the modified NLW equation
	\begin{equation}
		\zeta_{tt}+\Delta_{\mathbb{H}^3}\zeta+|\zeta|^3=\mathcal{N}
	\end{equation}
	where $\mathcal{N}=|u|^3-|\zeta|^3=\mathcal{O}(\psi^3+\psi\zeta^2)$.\\ \\
	Due to our choice of M, the result of Proposition 3.3 applies. We have 
	\begin{equation}
		\sup_{t\in I}E[\zeta(t)]\lesssim s_0^{-(1-s)}
	\end{equation}

	\begin{proof}
      Due to our choice of M, Prop (\ref{prop2}) is applicable. It remains to bound the other two terms on the RHS of (\ref{morawetz}).
      
      \begin{equation}
      	\begin{aligned}
      	   \|\mathcal{N}\zeta\|_{L^1_{t,x}(I)}
      	   &\lesssim \|\psi^3\zeta\|	_{L^1_{t,x}(I)}+
      	   \|\psi\zeta^3\|\\
      	   &\lesssim \delta \|\zeta\|_{L^4_{t,x}(I)}^4+C_\delta \|\psi\|^4_{L^4_{t,x}(I)}
      	\end{aligned}
      \end{equation}
      We used Young's inequality in the last inequality. For $\delta$ small enough, the $\delta \|\zeta\|_{L^4_{t,x}(I)}^4$ will be absorbed into the LHS of \ref{morawetz}. The $\|\psi\|_{L^4_{t,x}(I)}^4$ is bounded by $s_0^{2(s-\frac{1}{2})}$.\\
      
      The other term is estimated as follows:
      \begin{equation}
      	\begin{aligned}
      	   \|\mathcal{N}\nabla \zeta\|_{L^1_{t,x}(I)}
      	   &\lesssim (\|\psi\|_{L^3_tL^6_x(I)}^3+\|\psi\|_{L^\frac{8}{3}_tL^8_x(I)}\|\zeta\|_{L^\frac{8}{3}_tL^8_x(I)}\|\zeta\|_{L^4_{t,x}(I)})\|\nabla\zeta\|_{L^\infty_tL^2_x(I)}\\
      	\end{aligned}
      \end{equation}
      We have that $\|\psi\|_{L^3_tL^6_x(I)}\lesssim s_0^{\frac{1}{2}(s-\frac{2}{3})}$ and $\|\psi\|_{L^\frac{8}{3}_tL^8_x(I)}\lesssim s_0^{\frac{1}{2}(s-\frac{3}{4})}$ by Strichartz estimate. $\|\zeta\|_{L^4_{t,x}(I)}\leq M^\frac{1}{4}$ by assumption. To estimate $\|\zeta\|_{L^\frac{8}{3}_tL^8_x(I)}$, we apply Prop \ref{prop:01} on each subinterval and sum them up. There are $\leq \frac{M}{\epsilon}$ subintervals, thus
      \begin{equation}
        \begin{aligned}
            \|\zeta\|_{L^\frac{8}{3}_tL^8_x(I)}&\lesssim [\frac{M}{\epsilon}(s_0^{-\frac{1}{2}(1-s)})^\frac{8}{3}]^\frac{3}{8}\\
            &\lesssim M^\frac{3}{8}s_0^{-\frac{1}{2}(1-s)}
        \end{aligned}
      \end{equation}
      Collecting these terms, we have
      \begin{equation}
           \|\mathcal{N}\nabla \zeta\|_{L^1_{t,x}(I)}\lesssim s_0^{\frac{3}{2}s-\frac{11}{8}}M^\frac{5}{8}
      \end{equation}

	\end{proof}

	Plugging these into the Morawetz inequality, 
	\begin{equation}
		\|\zeta\|_{L^4_{t,x}}^4\lesssim s_0^{1-s}+\delta\|\zeta\|_{L^4_{t,x}}^4+C_\delta s_0^{2s-\frac{1}{2}}+s_0^{\frac{3}{2}s-\frac{11}{8}}M^\frac{5}{8}
	\end{equation}
	
	After absorbing $\epsilon\|\zeta\|_{L^4_{t,x}}^4$ into left hand side, we get 
	\begin{equation}\label{eq:final}
		\|\zeta\|_{L^4_{t,x}}^4\lesssim s_0^{\frac{3}{2}s-\frac{11}{8}}M^\frac{5}{8}
	\end{equation}
	
	In order for Prop \ref{prop3} to hold, we require the right hand side of \ref{eq:final} to be bounded by $\frac{1}{2}M\sim s_0^{-\frac{3}{16}s+\frac{1}{8}}$, i.e.
	\begin{equation}
		s_0^{\frac{3}{2}s-\frac{11}{8}}\cdot s_0^{\frac{5}{8}(-\frac{3}{16}s+\frac{1}{8})}\lesssim s_0^{(-\frac{3}{16}s+\frac{1}{8})}
	\end{equation}
	This can be achieved by $s_0$ small enough as long as the exponents satisfy \begin{equation}
	    \frac{3}{2}s-\frac{11}{8}+\frac{5}{8}(-\frac{3}{16}s+\frac{1}{8})>-\frac{3}{16}+\frac{1}{8}
	\end{equation}
	which gives us 
    \begin{equation}
        s>\frac{182}{201}\approx 0.905
    \end{equation}

    \begin{cor}[Global Wellposedness and Scattering]
        When $s>\frac{182}{201}$, The initial value problem (\ref{eq:01}) are globally wellposed and the solutions scatter in $L^4_{t,x}\cap C_tH_x^\frac{1}{2}$. The scattering norm $\|u\|_{L^4_{t,x}}\leq f(s,\|u_0\|_{H^s},\|u_1\|_{H^{s-1}})$ for some function $f>0$.
    \end{cor}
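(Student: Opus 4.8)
The plan is to read this corollary as the harvest of the bootstrap implication in Proposition \ref{prop3}: once we know that the a priori hypothesis $\|u\|_{L^4_{t,x}(I)}^4\le M$ self-improves to $\tfrac{1}{2}M$, a standard continuity argument promotes it into an unconditional global bound, and global existence together with scattering then follow by soft arguments. Throughout, $M\sim s_0^{-\frac{3}{16}s+\frac{1}{8}}$ is a fixed finite constant determined by $s$ and the data norms $\|u_0\|_{H^s},\|u_1\|_{H^{s-1}}$ (through the admissible choice of $s_0$ forced by \ref{eq:final}), so any bound of the form $\|u\|_{L^4_{t,x}}\lesssim M^{1/4}$ is precisely the asserted $f(s,\|u_0\|_{H^s},\|u_1\|_{H^{s-1}})$.

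First I would run the continuity argument on $[0,T^*)$, where $T^*$ is the maximal forward existence time from the local theory. Set $\Phi(T)=\|u\|_{L^4_{t,x}([0,T])}^4$; by local well-posedness in $C_tH^{1/2}_x\cap L^4_{t,x}$ and dominated convergence, $\Phi$ is continuous and non-decreasing with $\Phi(0)=0$. The set $A=\{T\in[0,T^*):\Phi(T)\le M\}$ is nonempty (it contains a neighborhood of $0$) and relatively closed by continuity, while Proposition \ref{prop3} shows it is open: whenever $\Phi(T)\le M$ the implication yields $\Phi(T)\le\tfrac{1}{2}M<M$, and continuity then keeps $\Phi\le M$ on a neighborhood of $T$. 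Connectedness of $[0,T^*)$ gives $A=[0,T^*)$, hence $\Phi\le\tfrac{1}{2}M$ throughout; the symmetric argument handles negative times.

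Next I would upgrade this to a global solution. The local theory carries a blow-up criterion: the solution extends past $T^*$ unless $\|u\|_{L^4_{t,x}([0,T^*))}=\infty$. Since we have just bounded this norm by $(\tfrac{1}{2}M)^{1/4}<\infty$, we conclude $T^*=+\infty$ and likewise $T_*=-\infty$, so $u$ is global with $\|u\|_{L^4_{t,x}(\mathbb{R}\times\mathbb{H}^3)}\le(\tfrac{1}{2}M)^{1/4}$. For scattering, the global bound gives $F(u)=u^3\in L^{4/3}_{t,x}$ with $\|u^3\|_{L^{4/3}_{t,x}}=\|u\|_{L^4_{t,x}}^3<\infty$. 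Using the Duhamel representation (\ref{eq:duh}) I would define the forward data $(u_0^+,u_1^+)=(u_0,u_1)-\int_0^\infty\bigl(\tfrac{\sin(\tau\sqrt{-\Delta})}{\sqrt{-\Delta}},\cos(\tau\sqrt{-\Delta})\bigr)F(u(\tau))\,d\tau$, where the integral converges in $H^{1/2}\times H^{-1/2}$ because $(4,4,\tfrac{1}{2})\in\mathcal{R}$ and the dual estimate (\ref{eq:Stric02}) controls it by $\|F(u)\|_{L^{4/3}_{t,x}}$. Then $(u,u_t)-S(t)(u_0^+,u_1^+)$ is the tail Duhamel integral over $(t,\infty)$, whose $H^{1/2}\times H^{-1/2}$ norm is bounded by $\|u^3\|_{L^{4/3}_{t,x}((t,\infty)\times\mathbb{H}^3)}\to 0$ as $t\to+\infty$ by dominated convergence, and analogously as $t\to-\infty$, establishing scattering in $C_tH^{1/2}_x\cap L^4_{t,x}$.

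The hard part will not be any single inequality, since those are all supplied by Proposition \ref{prop3}, the Morawetz estimate (\ref{morawetz}), and Theorem \ref{thmStric}; rather it is making the packaging rigorous. I expect the delicate points to be confirming that the local theory on $\mathbb{H}^3$ genuinely delivers both the continuity of $\Phi$ and an $L^4_{t,x}$ blow-up criterion, and verifying that the constant $M$ truly depends only on the data through $s_0$. The continuity of $\Phi$ is the most sensitive step: it rules out a jump of the scattering norm at an interior time, which is exactly where local well-posedness in the space $C_tH^{1/2}_x\cap L^4_{t,x}$ must be invoked.
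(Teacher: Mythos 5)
Your proposal is correct and follows essentially the same route as the paper, which simply invokes Proposition \ref{prop3} together with a ``standard continuity argument'' and declares global well-posedness and scattering to be straightforward consequences; your write-up is just the honest expansion of that (bootstrap via the set $A$, the $L^4_{t,x}$ blow-up criterion, and the Duhamel tail estimate with the admissible triple $(4,4,\tfrac{1}{2})\in\mathcal{R}$ for scattering). The only nitpick is the sign in your formula for $(u_0^+,u_1^+)$, which depends on the convention for the forcing term in (\ref{eq:duh}) and washes out of the argument.
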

    \begin{proof}
        The bound for scattering norm follows from Prop \ref{prop3} and a standard continuity argument. Global wellposedness and scattering are straightforward result of that.
    \end{proof}
	\bibliographystyle{alpha}
	\bibliography{NLSH3.bib}

	%\begin{thebibliography}{9}
	%\bibitem{1}
	%S.B. Angenent, T. Ilmanen, and D.L. Chopp, \textit{A computed example of non-uniqueness of mean curvature flow in $\mathbb{R}^3$}, Commun. in Partial Differential Equations 20 (1995), no. 11-12, 1937-1958.
	
	%\end{thebibliography}
\end{document}